\def\beq{\begin{equation}}
\def\eeq{\end{equation}}
\def\cst{{\rm Cst}}
\def\bB{\mathbf{B}}
\def\bD{\mathbf{D}}
\def\bE{\mathbf{E}}
\def\bF{\mathbf{F}}
\def\bG{\mathbf{G}}
\def\bJ{\mathbf{J}}
\def\bP{\mathbf{P}}
\def\bn{\mathbf{n}}
\def\bp{\mathbf{p}}
\def\bq{\mathbf{q}}
\def\bv{\mathbf{v}}
\def\bx{\mathbf{x}}
\def\eps{\varepsilon}
\def\bDelta{{\boldsymbol{\Delta}}}
\def\bsigma{{\boldsymbol{\sigma}}}
\def\b1{{\boldsymbol{1}}}
\def\cC{\mathcal{C}}
\def\cE{\mathcal{E}}
\def\cF{\mathcal{F}}
\def\cH{\mathcal{H}}
\def\cI{\mathcal{I}}
\def\cJ{\mathcal{J}}
\def\cK{\mathcal{K}}
\def\cM{\mathcal{M}}
\def\cN{\mathcal{N}}
\def\cO{\mathcal{O}}
\def\cR{\mathcal{R}}
\def\cS{\mathcal{S}}
\def\tq{\tilde{\bq}}
\def\tf{\tilde{f}}
\def\tT{\widetilde{T}}
\def\tpi{\tilde{\pi}}
\def\tQ{\widetilde{Q}}
\def\tM{\widetilde{M}}
\def\eps{\varepsilon}
\def\pQ{\partial Q}
\newtheorem{theorem}{Theorem}[section]
\newtheorem{lemma}[theorem]{Lemma}
\newtheorem{proposition}[theorem]{Proposition}
\newtheorem{corollary}[theorem]{Corollary}
\theoremstyle{definition}
\newtheorem{remark}{Remark}
\numberwithin{equation}{section}
\begin{document}

\title{Electrical current in Sinai billiards under general small forces}

\author{Nikolai Chernov}
\address{Department of Mathematics, University of Alabama at Birmingham, Birmingham, AL
35294}
\email{chernov@math.uab.edu}

\author{Hong-Kun Zhang}
\author{Pengfei Zhang}
\address{Department of Mathematics and Statistics, UMass Amherst, Amherst, MA 01002}
\email{hongkun@math.umass.edu, pzhang@math.umass.edu}

\date{\today}

\maketitle

%\centerline{AMS classification numbers: 37D50, 37A25}

%\renewcommand{\theequation}{\arabic{section}.\arabic{equation}}

\begin{abstract}
 The  Lorentz gas
of $\mathbb{Z}^2$-periodic scatterers (or the so called Sinai billiards) can be used to model motion of electrons on an ionized medal. We investigate the linear response for the system
under various external forces (during both the flight and the collision).
We give some characterizations under which the forced system is
time-reversible, and
derive an estimate of the electrical current generated by the forced system.
Moreover, applying Pesin entropy formula and Young dimension formula, we get
several characterizations of the non-equilibrium steady state of the forced system.
\end{abstract}

\section{Introduction}

Lorentz gas is a popular model in mathematical physics introduced in
1905, (see \cite{Lor}), in studying the motions of a point-particle
or a gas of particles (electrons) in a metallic conductor. Here we
consider a two-dimensional periodic Lorentz gas, that is, a particle
moves on the plane and bounces off a $\mathbb{Z}^2$-periodic ray of
scatterers (ions). In this case the dynamics reduces to a
dynamical billiard system on the 2-D torus $\mathbb{T}^2$, generated by a
billiard moving freely until it bounces off the scatterers. More
precisely, let $\bB_1,\cdots, \bB_s$ be open convex domains on
$\mathbb{T}^2$ with mutually disjoint closures, and
$Q=\mathbb{T}^2\backslash \bigcup \bB_i$. Moreover, we assume that
the boundary of each $\bB_i$ is $C^3$ smooth with non-vanishing
curvature.

The study of classical billiard dynamics was originated in 1970 by Sinai. This model is a Hamiltonian system, so it preserves the kinetic energy $E=\frac{1}{2}\|\bp\|^2$.
Therefore we can restrict the billiard flow to a 3-D submanifold $\cM_0=\{(\bq,\bp):E(\bq,\bp)=c\}$,
on which the Liouville measure is also preserved by the flow.
Moreover there is a 2-D global cross-section $M_0\subset\cM_0$, which consists of the post-collision vectors:
$M_0=\{(\bq,\bp)\in\cM_0:\bq\in\partial Q, \bp\cdot\bn(\bq)\ge0\}$. The Poincar\'e map $T_0:M_0\to M_0$
preserves a smooth measure $d\nu_0=\text{cst}\cdot\cos\varphi dr d\varphi$, where $\varphi$
is the angle from the normal vector $\bn(\bq)$ and $\bp$.
The map $T_0$ and the flow $\Phi_0$ have been shown to be uniformly hyperbolic and
Bernoulli \cite{Si70,GO74}, and many statistical properties have been well understood and  are proved, see \cite{BSC,You98,Ch99}, and the references therein.

Recently, much attention  is shifting to the investigation of statistical properties of nonequilibrium billiards.
Nonequilibrium phenomena are characterized by the action of external forces
or boundary conditions for transport  equations that change the system and generate a steady
process that can be measured--mass transfer, energy (heat) transfer, charge transfer
(electrical current), entropy production, or others. The laws in equilibrium statistical physics are better understood
and proved or almost proved in quite a few cases.
However,  the apparatus of equilibrium statistical mechanics still relies
largely on  heuristic statements based on numerical results,
and only very few models have been  studied with sufficient
mathematical rigor.  The main difficulty is that nonequilibrium  billiards have singularities and unbounded derivatives,  they usually do not preserve smooth measures, and their evolution is described by steady states characterized by singular invariant measures, of which relatively little is known in general.
In addition, since Gibbs entropy is invariant under a Hamiltonian time evolution,  the study of
entropy increase (the second law of thermodynamics) in nonequilibrium systems is far from straightforward \cite{Ru99}.
One of the first nonequilibrium physical models that were studied rigorously is the
periodic Lorentz gas with a constant electrical field by Chernov, Eyink,  Lebowitz and Sinai  \cite{CELS1,CELS2} and the famous Ohm's law was proved for that case.
Similar studies on special nonequilibrium dynamics were conducted by
Bunimovich and Spohn \cite{BS1}, Gallavotti \cite{G95}, Ruelle \cite{Ru99}, and others.

Here we investigate some physical laws for  Sinai billiards
(or periodic Lorentz gases) under general external forces. Let $\bq=(x,y)$ be the position
of a particle in a billiard table $Q:=\mathbb{T}^2\setminus \cup_i \bB_i$,
and $\bp=(\dot x, \dot y)$ be the velocity vector.
We add two types of forces to the system in the following steps:
\begin{enumerate}
\item (\textbf{during the  flight}) Let $\bF=\bF(\bq, \bp)$ be a stationary external force on $Q$.
The forced billiard flow is governed by the following differential equation between collisions:
\beq\label{flow0}
\left\{
  \begin{array}{ll}
    \dot \bq=\bp, \\
\dot \bp= \bF,
  \end{array}
\right.
\eeq
where the dot derivative refers to differentiate with respect to the time $t$.

\item (\textbf{at the moment of collision}) Let $\bG$ be an external twisting force spreading on $\pQ$
that acts on each incoming trajectory
right after its elastic collision with $\pQ$:
\begin{equation}
\label{reflectiong}
(\bq^+(t_i), \bp^+(t_i)) =\bG(\bq^-(t_i), \cR\bp^-(t_i)),
\end{equation}
 where $\cR \bp^-(t_i)= \bp^-(t_i)+2(\bn(\bq^-)\cdot \bp^{-})\bn(\bq^-))$
 is the usual elastic reflection operator,
 $\bn(\bq)$ is the unit normal vector to the billiard wall $\partial Q$ at
 $\bq$ pointing inside of the table $Q$, and $\bq^-(t_i), \bp^-(t_i)$, $\bq^+(t_i)$ and
 $ \bp^+(t_i)$ refer to the incoming and  outgoing position and velocity vectors, respectively.
\end{enumerate}
Note that the twisting force $\bG$ changes not only the outgoing velocity of the billiard,
but also the position of the billiard along the boundary $\partial Q$.
The change in velocity can be thought of as a kick, while a change in
position can model a slip along the boundary at collision.
These forces indicate that the system will experience nonelastic reflections.

The type of forces we consider here are quite common in many physical models.
For example we
can have a potential function $U$ on $Q$, or an electromagnetic field (when the moving billiard
is an electron), see \cite{CELS1,CELS2,Ch01,Ch08}.
The twisting force right after the collision is also closely related to
real-world models, especially the so called soft scatterers, see \cite{Ba,Kn}.
Recall that the unforced system can be viewed  as a Hamiltonian system under
a potential function $U$ given by $U(\bq)=0$ if $\bq\in Q$, and $+\infty$ on the  scatterers.
In this case, the scatterers $B_i$ are said to be rigid (or hard).
Now if we replace $U$ by some finite potential function on each $B_i$,
then the resulting $B_i$ is a soft one: a running billiard will climb up $B_i$ and then
exit very soon. Different shapes of the scatterer result in different enter-exit relations.
If we view what happen on the scatterers as a black box, the effect can be
understood as a twisting force $\bG$ right after the elastic collision.
Clearly this kind of twisting forces not only preserve the tangent collisions, and are also
time-reversible. In this way we get a uniform treatment of various softened scatterers.

In \cite{Ch01, Ch08}, Chernov considered billiards under small external forces $\bF$
between collisions, and proved several ergodic and statistical properties of the SRB measure for the perturbed billiard system.
 In \cite{DC} Dolgoyat and Chernov put a constant electric field on Lorentz gases with infinite horizon
and got various characterizations of the steady state electric current generated by the forced system.
The systems with some simple twist forces were considered in \cite{Z11}, assuming that
$\bG$ depends on and affects only the velocity, not the position.
The Green-Kubo type formula was proved, and it was shown that the current generated by the forced flow is closely related to the strength of the force.
Very recently, Chernov and Korepanov investigated in \cite{CK} the linear response for
Sinai billiards under external forces (without twisting).
In  this paper, we consider the dynamics of the Sinai
billiard on the table $Q$, but subject to more general forces $\bP=(\bF,\bG)$ both during flight and at collisions. We characterized certain properties of the SRB measure for the forced systems by  obtaining the Pesin's K-S entropy formula
and Young's expression for the fractal dimension. Moreover, we also prove rigorously the Green-Kubo like formula and investigate the linear response formula.\\

\noindent{\bf Structure of the paper.}  This paper is organized as follows. In \S 2 we list
the main assumptions on the external forces $\bP=(\bF,\bG)$, introduce some basic notations
and propositions of our systems. In \S 3 we state the main theorems about the linear responses and
statistical properties of our forced billiard system. We also list quite a few examples
of the external forces and study the current generated by these forced systems.
Then we divide the analysis of the forced systems  into two
steps: in \S 3 we study the effect of the force $\bF$ during the flight, and in \S 4 we add
the twisting effect and conclude the proof of our main theorems.

\section{Main results}
\subsection{Assumpsions}

In this subsection we  first state the assumptions on the model, which
combine the assumptions in \cite{Ch01, Z11}. Let $\bP=(\bF,\bG)$,
where $\bF$ and $\bG$ are the two external forces
during the flight and right after the reflection, respectively.
Let $\Phi_{\bP}$ be the induced billiard flow on $Q$. \\

\noindent\textbf{(A1)} (\textbf{Invariant space}) \emph{The
forced flow $\Phi_{\bP}$ preserve a smooth function $\cE(\bq,
\bp)$, such that the level surface $\cM:=\{\cE(\bq,\bp)=c\}$ is a
compact 3-D manifold, for some $c>0$.
Moreover, $\| \bp\|>0$ on $\cM$, and for each $\bq\in Q$ and $ \bp\in S^1$, the ray $\{(\bq, t \bp), t>0\}$ intersects the manifold $\cM$ in exactly one point.} \\

Assumption ({\textbf{A1}}) specifies an additional integral of
motion, so that we only consider the restricted systems on a compact
phase space. For example, we can add a Gaussian thermostat (a heat
bath) to the system such that the billiard moves at a constant speed
(constant temperature if there are a large number of particles).
Then $\cM:=\{\|\bp\|=c\}$ is an invariant compact level set.

Under the assumption ({\textbf{A1}}), the speed $p=\| \bp\|$ of the
billiard along any typical trajectory on $\cM$ at time $t$ satisfies
$$0<p_{\min}\leq p(t) \leq p_{\max}<\infty,$$
for some constants $p_{\min}\leq p_{\max}$. Moreover, $\cM$ admits a
global coordinate system $\{(x,y,\theta):(x,y)\in
Q,0\le\theta<2\pi\}$, where $\theta$ is the angle between $\bp$ and
the positive $x$-axis. In particular, the speed $p=\|\bp\|$ on $\cM$
can be represented as a function $p=p(x,y,\theta)$. Then the
velocity $\bp$ at $\bq$ is given by $\bp=p\bv$, where
$\bv=(\cos\theta, \sin\theta)$ is the unit vector in the
direction of $\bp$. So Eq. \eqref{flow0} of the dynamics between
collisions can be rewritten as \beq\label{vpF} \dot p
\bv+p\dot\bv=\bF. \eeq Multiplying $\bv$ to both sides of
(\ref{vpF}) using dot product and cross product respectively, we
then get \beq\label{pvF} \dot p= \bv\cdot \bF,\,\,\,\,\,\text{ and
}\,\,\,\,\,\,p\bv\times \dot\bv=\bv\times \bF. \eeq Therefore, the
equations in (\ref{flow0}) have the following coordinate
representations that, at any $(x,y,\theta)\in\cM$, \beq\label{flow}
\left\{
  \begin{array}{ll}
    \dot x=p\cos\theta, \\
    \dot y=p\sin\theta, \\
  \dot \theta=(-F_1\sin\theta+F_2\cos\theta)/p.
  \end{array}
\right.
\eeq
Consider the trajectory $\tilde\gamma\subset\cM$ of the flow passing through the point $(x,y,\theta)\in\cM$,
which projects down to a smooth curve $\gamma\subset Q$.
Let $h=h(x,y,\theta)$ be the (signed) geometric curvature of $\gamma$ at the base point $(x,y)\in Q$.
Then we have that
\beq\label{formh}
h(x,y,\theta)=\pm\frac{\|\dot\bq\times\ddot\bq\|}{\|\dot\bq\|^3}=\pm\frac{\|\bv\times \bF\|}{p^2}
=\frac{-F_1\sin\theta+F_2\cos\theta}{p^2},
\eeq
where the sign should be chosen accordingly. Then combining with (\ref{flow}), we have
\beq\label{ptheah}
\dot\theta= p h.
\eeq
Note that the angle $\theta=\theta(t)$ experiences a discontinuity at the times of reflection.
That is, it changes from $\theta^-$ to $\theta^+$.
In the elastic collision case, all other quantities ($x$, $y$ and $p$) stay the same.
For example the speed $p(x,y,\theta^+)=p(x,y,\theta^-)$
(here $(x,y)\in\partial Q$). Under the twisting forces, all quantities are subject to change.\\

For any phase point $(\bq,\theta) \in \cM$ for the flow, let
$\tau(\bq,\theta)$ be the time for the trajectory from
$(\bq,\theta)$ to its next non-tangential collision at $\partial Q$. \\

\noindent(\textbf{A2}) (\textbf{Finite horizon})
\emph{There exist $\tau_{\max}>\tau_{\min}>0$ such that free paths between successive reflections are uniformly bounded:
$\tau_{\min} \le \tau(\bq,\theta ) \le \tau_{\max}$, for all $(\bq,\theta) \in \cM$ with $\bq\in\pQ$.
In addition,  the curvature $\cK(r)$ of the boundary $\partial Q$
is also uniformly bounded for all $r \in \partial Q$.} \\

Assumption (\textbf{A2}) implies that there exists a 2-D global
cross-section, the post-collision space $M$, of the perturbed
billiard flow $(\cM,\Phi_{\bP})$: $M=\{(\bq,\bp)\in
\cM:\bq\in\partial Q, \bp\cdot\bn(\bq)\ge0\}$, which consists all
outgoing vectors in $\cM$ based at the boundary of the billiard
table $Q$. Denote by $T_{\bP}: M\to M$ the Poincar\'e map induced by
the forced flow $\Phi_{\bP}$ on $\cM$. Moreover, the 2-D space $M$ can be
parameterized by $\bx=(r,s)$, where $r$ is the arclength parameter
of $\partial  Q$ oriented clockwise, and a new parameter
$s=\sin\varphi$, where $\varphi$ is the angle formed by the outgoing
vector $\bp$ and the normal vector $\bn(\bq)$. This coordinate
system has the advantage that the Lebesgue measure $d\mu_0=\cst\,d
r\, d s$ coincides with the measure $d\nu_0=\cst \cos\varphi dr\,
d\varphi$. Using this new coordinate system, the collision time can
be written as $\tau(\bx)=\tau(r,s)$, and the twist force $\bG$ at
the collision can be reformulated as $(\bar r,\bar s)=\bG(r,s)
=(r,s)+(g^1(r,s),g^2(r,s))$. \\

\noindent(\textbf{A3}) (\textbf{Smallness of the external forces}). \emph{There exists
$\eps>0$ small enough, such that the forces $\bP=(\bF,\bG)$ satisfy
$$\|{\bF}\|_{C^1}<\eps, \|\bG-\mathrm{Id}_M\|_{C^1}<\eps.$$
Moreover, we assume that $\bG$ preserves tangential collisions:
$\bG(r,\pm1)=(r,\pm1)$. In other words, $g^i(r,\pm1)=0$ for each $i=1,2$. }

In particular, the singularity set of $T^{-1}_{\bP}$ is the same as that
of untwisted map $T^{-1}_{\bF}$.\\

Let $\cI:\cM\to\cM$ be the reversal  transformation (also called involution),
which is defined by $\cI(x,y,\theta)=(x,y,\pi+\theta)$. Let $\Phi_t:\cM\to\cM$ be a general flow.
The reversed flow of $\Phi$ is defined by $\Phi_t^{-}=\cI\circ \Phi_{-t}\circ \cI$.
Then the flow $\Phi$ is said to be {\it time-reversible}, if $\Phi_t^-=\Phi_t$.
It is well known that the unforced billiard flow is time-reversible. \\

\noindent(\textbf{A4}) (\textbf{Time-reversibility})
\emph{Both forces $\bF$ and $\bG$ are stationary,
and the forced billiard flow $\Phi_{\bP}$ is time-reversible.} \\

Let $\eps>0$, $\tau_\ast\in(0,1)$ and $\cF(Q, \tau_\ast, \eps)$ be the collection
of all forced billiard maps
defined by the dynamics \eqref{flow0} and \eqref{reflectiong} under
the external forces $\mathbf{P}=(\bF,\bG)$ that satisfying the assumptions (\textbf{A1})--(\textbf{A4})
with $\tau_\ast\le\tau_{\min}\le\tau_{\max} \le \tau_\ast^{-1}$.
The following lemma was proved by  Demers and  Zhang in [DZ11]:
\begin{lemma}\label{TmEDC}
Each map $T\in \cF(Q, \tau_\ast, \eps)$ preserves a unique  SRB
measure (the nonequilibrium steady state) $\mu_{T}$ that is mixing, Bernoulli and positive on open
sets in $M$. Let $\cH$ be the collection of all piecewise H\"older
continuous functions on $M$ where the discontinuities occur on the singularity sets $T$. Then
\begin{enumerate}
\item[a.] (Equidistribution) for any $f\in\cH$, $T^n\mu_0(f)\to \mu_T(f)$ at an exponential rate;

\item[b.] (Decay of Correlations) for any $f,g\in\cH$, $\mu_0(f\circ T^n\cdot g)\to \mu_T(f)\cdot\mu_0(g)$
at an exponential rate.

\item[c.] (Central Limit Theorem) for any $f\in\cH$, $f_n=f+f\circ T+\cdots +f\circ T^{n-1}$,
then $\frac{f_n}{\sqrt{n}}\Rightarrow \cN(\mu_T(f),\sigma^2_f)$, where $\sigma^2_f=\sum_{n\in\mathbb{Z}}\cC_{f,f}(T^n)$.
Here the convergence means that the distributions of  $\frac{f_n}{\sqrt n}$
converge to the normal distribution $ \cN(\mu_T(f), \sigma^2_f)$.
\end{enumerate}
Moreover, the flow $\Phi$ preserves a mixing SRB measure
$\hat\mu_{T}$ with similar properties.
\end{lemma}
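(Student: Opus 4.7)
The plan is to adapt the Young-tower and standard-pair machinery developed for unperturbed Sinai billiards and extended to small flight forces in \cite{Ch01,Ch08}, now incorporating the collision twist $\bG$ as in \cite{Z11}. Assumption (\textbf{A3}) --- $\bG$ is $C^1$-close to the identity and fixes tangential collisions --- is exactly what is needed so that the singularity set of $T_\bP^{-1}$ coincides with that of the untwisted map $T_\bF^{-1}$; consequently the geometry of stable manifolds is still organized by the same Chernov homogeneity strips used in the classical theory. Using \eqref{formh} and \eqref{ptheah}, the flight trajectories have curvature $O(\eps/p_{\min}^2)$, so for $\eps$ small the Jacobi equation for the unstable wave-fronts continues to yield invariant unstable cones on $\cM$ with uniform expansion; the twist $DT_\bP$ at each collision is an $\eps$-perturbation of the elastic reflection differential, so the cones remain invariant and the expansion rate stays uniformly bounded below by a constant $\Lambda>1$, uniformly over $\bP\in\cF(Q,\tau_\ast,\eps)$.

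Next I would establish a uniform growth lemma. By (\textbf{A3}) the singularities of $T_\bP^{-1}$ are the same curves as in the untwisted case and retain the monotonicity and alignment with the stable cones needed in \cite{Ch99,Ch01}. The Demers--Zhang growth lemma then yields: for any sufficiently short homogeneous unstable curve $W$ and any $n\ge 0$, the proportion of the length of $T^n W$ lying within a $\delta$-neighborhood of $\cS_{T^{-1}}$ is bounded by $C\delta$, uniformly in $n$ and in $\bP$. Together with standard distortion bounds this gives an SRB measure $\mu_T$ with bounded, H\"older conditional densities along unstable curves; uniqueness, mixing, and the Bernoulli property follow from local product structure and Ornstein theory.

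To prove items (a)--(c) I would represent $\mu_0$ as a standard family and run the coupling lemma between pairs of standard families on a hyperbolic product set serving as the base of a Young tower with exponential return-time tails. The exponential coupling time simultaneously yields item (a), equidistribution $T^n_\ast\mu_0(f)\to\mu_T(f)$ at an exponential rate; item (b), exponential decay of $\cC_{f,g}(n)$; and summability $\sigma_f^2=\sum_{n\in\IZ}\cC_{f,f}(T^n)<\infty$. The CLT in (c) then follows from Gordin's martingale approximation on the Young tower. The flow measure $\hat\mu_T$ is the suspension of $\mu_T$ under the return time $\tau$, which is bounded away from $0$ and $\infty$ by (\textbf{A2}); its mixing is obtained by the transfer from a hyperbolic base to a flow, once one checks that $\tau$ is not cohomologous to a piecewise constant, a non-degeneracy condition preserved under $C^1$-small perturbations.

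The main obstacle is ensuring uniformity of the coupling construction across the whole class $\cF(Q,\tau_\ast,\eps)$ in the presence of the twist $\bG$. The twist shifts the footpoint of a post-collision vector along $\partial Q$, so an unstable curve almost tangent to a homogeneity strip before a collision can be pushed across neighboring strips afterwards, potentially creating extra fragmentation not present in the untwisted analysis. Handling this requires a quantitative distortion bound for $\bG$ near $s=\pm 1$, and the condition $g^i(r,\pm 1)=0$ in (\textbf{A3}) is decisive: it prevents $\bG$ from generating new tangencies or new singularity curves and guarantees that the widths of the homogeneity strips are only $\eps$-perturbed. Once this uniform distortion estimate is in hand, the growth lemma, coupling lemma, tail estimates and flow-transfer step all follow from their untwisted counterparts by standard robustness arguments.
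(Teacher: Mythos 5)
The paper does not prove this lemma at all: it is imported verbatim as a known result, attributed to Demers and Zhang \cite{DZ11}, so there is no internal proof to compare yours against. Your outline is therefore best judged against the cited literature. There it differs genuinely in method: \cite{DZ11} establishes these properties by spectral analysis of the transfer operator on anisotropic Banach spaces (a spectral gap, uniform over the perturbation class, from which equidistribution, exponential decay of correlations, and the CLT via the Nagaev--Guivarc'h perturbative argument all follow), whereas you propose the coupling/standard-pair and Young-tower route of \cite{Ch99,Ch01,Ch08,CZ09}. Both are established technologies for dispersing billiards under small perturbations, and your plan correctly isolates the one genuinely new technical point relative to the untwisted case --- controlling the twist $\bG$ near tangential collisions, where the condition $g^i(r,\pm 1)=0$ from (\textbf{A3}) prevents new singularity curves and keeps the homogeneity-strip structure intact --- which is indeed the hypothesis that makes the class $\cF(Q,\tau_\ast,\eps)$ amenable to either method. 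What the spectral approach buys is a cleaner uniformity statement (stability of the spectral data in the perturbation, in the spirit of Keller--Liverani), which is what the paper later uses implicitly when it lets $\bP\to 0$; what your coupling approach buys is more explicit geometric control and easier access to equidistribution of the specific initial measure $\mu_0$ represented as a standard family. Two caveats on your sketch: it remains an outline that defers the growth lemma, distortion bounds, and tail estimates to ``standard robustness arguments,'' which is where all the work lies; and your final step on mixing of the flow (non-cohomology of $\tau$ to a lattice-valued function being ``preserved under $C^1$-small perturbations'') is asserted rather than argued --- cohomological non-degeneracy is not obviously an open condition and deserves its own justification, e.g.\ via the contact structure of the flow.
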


\subsection{Main results}

In this section we state the main results of this paper, the
properties of forced billiard systems under the assumptions
(\textbf{A1})--(\textbf{A4}). More precisely, let $\eps>0$ be small
enough, $\tau_\ast\in(0,1)$ and $\bP=(\bF,\bG)$ be an $\eps$-small
external force pair such that $T_{\bP}\in \cF(Q, \tau_*, \eps)$,
$\mu_{\bP}$ be the SRB measure on $M$ given in Theorem \ref{TmEDC}.
Let $\tQ$ be the $\mathbb{Z}^2$-periodic table on the plane,
$\tT_{\bP}$ be the forced collision map on the
$\mathbb{Z}^2$-periodic collision space $\tM$, and
\beq\label{displace}
\widetilde\bDelta_{\bP}(\bx)=(\widetilde\Delta_{x,\bP},
\widetilde\Delta_{y,\bP})=\tpi\circ \tT_{\bP}(\bx)-\tpi(\bx)\eeq be
the displacement vector between collisions, where $\tpi$ is the
projection from $\tM$ to the base point in $\tQ$. It is easy to see
that that $\widetilde\bDelta_{\bP}$ is $\mathbb{Z}^2$-periodic,
which induces a displacement function on the collision space $M$,
say $\bDelta_{\bP}$.
Our first theorem describes estimations on the current for the discrete system.\\
\begin{theorem}\label{Thm:2}
(a) The discrete-time  steady state electrical current is well-defined and given
by:
 \beq\label{current}
 \bJ_{\bP}=\lim_{n\to\infty} \tq_n/n=\mu_{\bP}(\bDelta_{\bP}).
 \eeq
(b) The current ${\bJ_{\bP}}$ satisfies $ \bJ_{\bP}=\eps \bsigma+o(\eps)$,
where $\bsigma=(\sigma_{x}, \sigma_{y})$ is uniformly bounded and satisfies
 \beq\label{sigmaa}
\sigma_a=\frac{1}{2}\mu_0(\Delta_{a,\bP}\cdot H)
+\sum_{k=1}^{\infty}\mu_0[(\Delta_{a,\bP}\circ T^k_{0})\cdot H],\quad a\in \{x,y\},
 \eeq
where $H(r,s)=(2-\exp(\int_0^{\tau_{\bF}(r,s)}ph_\theta\, dt)
-\cJ_{\bG}(r_1,s_1))/\eps$ is a  uniformly bounded function, and $\cJ_{\bG}$ is the Jacobian
of the twisting force $\bG$.\\
(c) As $n\to\infty$,
\beq\label{tqnjncon}
\frac{\tq_n- n\bJ_{\bP}}{\sqrt n}\Rightarrow \cN(0, \bD_{\bP}),
\eeq
where $\bD_{\bP}$ is the discrete-time diffusion matrix of
the Lorentz particle, which is given by:
\beq\label{greeneps}
\bD_{\bP}=\sum_{n=-\infty}^{\infty}
\left[\mu_{\bP}(\bDelta_{\bP}\circ T^n_{\bP}\otimes \bDelta_{\bP})
-\mu_{\bP}(\bDelta_{\bP})\otimes \mu_{\bP}(\bDelta_{\bP})\right]
\eeq

\noindent(d) $\bD_{\bP}$ is continuous with respect to the size of
the force pair $\bP$ at $\bP=0$:
 \beq\label{hcDepsdel}
 \bD_{\bP}=\bD_0+o(1).
 \eeq
\end{theorem}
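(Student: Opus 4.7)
\emph{Proof strategy.} The plan is to prove continuity of $\bD_{\bP}$ at $\bP=0$ by truncating the two-sided series (\ref{greeneps}) at a large cutoff $N$: the tail is controlled uniformly in $\bP$ via exponential decay of correlations, while the finitely many remaining terms are handled by continuous dependence of the SRB measure, the Poincar\'e map, and the displacement function on $\bP$.

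\textbf{Uniform tail estimate.} Write $C_n(\bP):=\mu_{\bP}(\bDelta_{\bP}\circ T^n_{\bP}\otimes\bDelta_{\bP})-\mu_{\bP}(\bDelta_{\bP})\otimes\mu_{\bP}(\bDelta_{\bP})$. By (\textbf{A1})--(\textbf{A2}) the displacement satisfies $\|\bDelta_{\bP}\|_\infty\le p_{\max}\tau_{\max}\le p_{\max}/\tau_\ast$ uniformly over $\cF(Q,\tau_\ast,\eps)$, and $\bDelta_{\bP}\in\cH$ with H\"older data that can be controlled uniformly in $\bP$. Applying Lemma \ref{TmEDC}(b) in its uniform form---the constants in the Demers--Zhang construction depend only on $(\tau_\ast,\eps)$, not on the particular map---I obtain $C>0$ and $\theta\in(0,1)$ independent of $\bP$ such that $\|C_n(\bP)\|\le C\theta^{|n|}$ for every $n\in\IZ$ and every admissible $\bP$. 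Given $\eta>0$, pick $N=N(\eta)$ so that $\sum_{|n|>N}\|C_n(\bP)\|+\sum_{|n|>N}\|C_n(0)\|<\eta/2$.

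\textbf{Termwise convergence.} For each fixed $|n|\le N$, I would show $C_n(\bP)\to C_n(0)$ as $\bP\to 0$. The linear term is immediate: $\mu_{\bP}(\bDelta_{\bP})=\bJ_{\bP}=\eps\bsigma+o(\eps)$ by part (b), while $\mu_0(\bDelta_0)=0$ follows from time-reversibility of the unforced flow together with the identity $\bDelta_0\circ\cI\circ T_0=-\bDelta_0$ and the $(\cI,T_0)$-invariance of $\mu_0$. For the cross term I would combine three ingredients: (i) $T_{\bP}\to T_0$ (and hence $T^n_{\bP}\to T^n_0$) in $C^0$ on compact subsets of $M$ away from the singularity set; (ii) $\bDelta_{\bP}\to\bDelta_0$ pointwise off the locus where $\tau_{\bP}$ differs from $\tau_0$, with $\|\bDelta_{\bP}\|_\infty$ uniformly bounded; (iii) weak-$\ast$ convergence $\mu_{\bP}\to\mu_0$ when tested against piecewise H\"older functions, which follows from Lemma \ref{TmEDC}(a) applied with the common reference measure $\mu_0$.

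\textbf{Main obstacle.} The delicate point is that $\bDelta_{\bP}\circ T^n_{\bP}\otimes \bDelta_{\bP}$ has discontinuities on the $n$-step singularity set $\cS^n_{\bP}=\bigcup_{k=-1}^{n}T^{-k}_{\bP}(\cS_{\bP})$, whose location drifts with $\bP$; consequently the integrand is not globally continuous, nor is the integrand for $\bP$ equal to that for $0$ on any fixed compact set. The standard remedy is a thickening/truncation argument: pick a continuous cutoff $\chi_\delta$ vanishing in the $\delta$-neighborhood $U_\delta^n$ of $\cS^n_0\cup\bigcup_{\|\bP\|<\eps}\cS^n_{\bP}$. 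On the complement of $U_\delta^n$ the integrand varies continuously with $\bP$, so the weak convergence from Lemma \ref{TmEDC}(a) handles the smoothed integrand; the cutoff error is dominated by $\|\bDelta\|_\infty^2\cdot\sup_{\bP}\mu_{\bP}(U_\delta^n)$, and this supremum tends to $0$ as $\delta\to 0$ uniformly in $\bP$ because the SRB densities along unstable leaves are uniformly bounded over $\cF(Q,\tau_\ast,\eps)$ (uniform absolute continuity being the cornerstone of the Demers--Zhang framework that underlies Lemma \ref{TmEDC}). Sending $\delta\to 0$ after $\bP\to 0$ and combining with the tail bound yields $\|\bD_{\bP}-\bD_0\|<\eta$ for all sufficiently small $\bP$, proving (\ref{hcDepsdel}).
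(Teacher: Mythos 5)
Your proposal addresses only part (d) of the theorem, so as a proof of the statement it has a substantial gap: parts (a), (b) and (c) are not argued at all, and part (b) is the heart of the result. Indeed, you invoke part (b) inside your own argument (``$\mu_{\bP}(\bDelta_{\bP})=\eps\bsigma+o(\eps)$ by part (b)''), which makes the reasoning circular as a proof of the full theorem. The paper establishes (a) and (b) in Theorem \ref{currentP}, and the proof there requires three ingredients that are absent from your write-up: (i) the expansion $1-\cJ_{\bP}=\eps H+\eps^2R_{\bP}$ of the Jacobian of $T_{\bP}$ with respect to $\mu_0$, together with the normalization $\mu_0(H)=0$ (Lemma \ref{cJP}), without which the series in \eqref{sigmaa} need not converge; (ii) the Kawasaki-type linear response formula $\mu_{\bP}(f)=\mu_0(f)+\eps\sum_{k\ge1}\mu_0[(f\circ T_0^k)\,H]+o(\eps)$, obtained by telescoping $T_{\bP}^n\mu_0(f)-\mu_0(f)$ and passing to the limit using equidistribution and exponential decay; and (iii) the time-reversibility cancellation $\mu_0(\bDelta_{\bP})+\mu_0(\bDelta_{\bP}\circ T_{\bP}^{-1})=0$, proved via the involution $I$ on a fundamental domain, which is precisely what produces the factor $\tfrac12$ in front of the $k=0$ term of \eqref{sigmaa} and removes the zeroth-order contribution (Lemma \ref{deltataueps} alone only gives $\mu_0(\bDelta_{\bP})=\cO(\eps)$ with an unidentified constant). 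Parts (a) and (c) are comparatively quick (Birkhoff plus the CLT of Lemma \ref{TmEDC} applied to $\bDelta_{\bP}\in\cH$, which requires the regularity statement of Lemma \ref{deltataueps}), but they too need to be said.

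For part (d) itself your route coincides with the paper's: a uniform-in-$\bP$ exponential bound $\|C_n(\bP)\|\le C\theta^{|n|}$ to control the tail, plus termwise convergence $C_n(\bP)\to C_n(0)$ for the finitely many remaining terms, using $T_{\bP}\to T_0$, $\bDelta_{\bP}\to\bDelta_0$ and weak convergence of $\mu_{\bP}$ to $\mu_0$ on $\cH$. You are in fact more careful than the paper about the drifting singularity sets: the cutoff argument near $\cS^{\bP}_{\pm n}$, with the uniform smallness of $\mu_{\bP}$-measure of thin neighborhoods coming from uniform absolute continuity in the Demers--Zhang framework, is a point the paper passes over silently when it asserts termwise convergence. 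That portion of your argument is sound; the problem is everything it leaves out.
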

In particular, the generated current $\bJ_{\bP}$ is comparable to
the size of the external force $\bP$, where $\bsigma$ resembles the
electric conductivity of the forced system. Moreover, the drift
effect is sub-linear. One may wonder if we could use the linear approximates
$\int_0^{\tau_{\bF}(r,s)}ph_\theta\, dt$ for the function $H$ in (b).
However, this may destroy the convergence of the series \eqref{sigmaa}
(see Remark \ref{linear} for more discussions).  However,
we do use the linear term for all physical models with a {\it Gaussian
thermostat} (see Proposition \ref{isokinetic},  \ref{forcefield} and Corollary \ref{Const}).  \\

The corresponding
results for the continuous-time  forced system is provided by the following
theorem.
\begin{theorem}\label{Thm:3}
Suppose a particle move in the domain $\tQ$ under the external force $\bP=(\bF,\bG)$.\\
(a) The steady state current generated by $\Phi_{\bP}$ is well-defined and given
by
 \beq\label{currentf}
 \hat\bJ_{\bP}=\lim_{t\to\infty}
 \tq(t)/t=\mu_{\bP}(\bDelta_{\bP})/\mu_{\bP}(\tau_{\bP}).
 \eeq
(b) The current $\hat{\bJ}_{\bP}$ satisfies
 \beq\label{JDEf}
 \hat\bJ_{\bP}=\eps \cdot\frac{\bsigma}{\bar\tau_{\bP}}+o(\eps).
 \eeq
(c) As $t\to\infty$,
\beq\label{tqnjnconf}
\frac{\tq(t)-\hat\bJ_{\bP} t}{\sqrt t}\Rightarrow \cN(0, \hat\bD_{\bP}),
\eeq
where $\hat\bD_{\bP}$ is the continuous-time  diffusion matrix of the Lorentz particle.\\
(d) The diffusion matrix is continuous with respect to the size of
force pair $\bP$ at $\bP=0$:
 \beq\label{hcDepsdelf}
 \hat\bD_{\bP}=\hat\bD_0+o(1)
 \eeq
\end{theorem}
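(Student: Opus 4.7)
\textbf{Proof proposal for Theorem \ref{Thm:3}.}

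The plan is to deduce each claim from its discrete-time counterpart in Theorem \ref{Thm:2} via the standard suspension representation of the flow $\Phi_{\bP}$ over the Poincar\'e map $T_{\bP}$ with roof function $\tau_{\bP}$. Write $\bar\tau_{\bP}=\mu_{\bP}(\tau_{\bP})$ and let $N(t)$ denote the number of collisions along the trajectory during the time interval $[0,t]$. Assumption (\textbf{A2}) forces $\tau_{\min}\le\tau_{\bP}\le\tau_{\max}$, so the displacement between consecutive collisions is uniformly bounded and $\bar\tau_{\bP}$ is bounded away from both $0$ and $\infty$. In particular
\[
\tq(t)=\tq_{N(t)}+O(1)
\]
uniformly in $t$, which will be the bridge between the discrete and continuous pictures.

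For part (a), the Birkhoff ergodic theorem applied to $\tau_{\bP}$ under the ergodic invariant measure $\mu_{\bP}$ from Lemma \ref{TmEDC} gives $N(t)/t\to 1/\bar\tau_{\bP}$ almost surely. Combined with Theorem \ref{Thm:2}(a) this yields
\[
\frac{\tq(t)}{t}=\frac{\tq_{N(t)}}{N(t)}\cdot\frac{N(t)}{t}+O(1/t)\;\longrightarrow\;\frac{\mu_{\bP}(\bDelta_{\bP})}{\bar\tau_{\bP}},
\]
which is \eqref{currentf}. For part (b) we simply divide the expansion $\bJ_{\bP}=\eps\bsigma+o(\eps)$ from Theorem \ref{Thm:2}(b) by $\bar\tau_{\bP}\in[\tau_{\min},\tau_{\max}]$ to obtain \eqref{JDEf}.

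Parts (c) and (d) are deduced via a standard CLT time-change. Writing
\[
\tq(t)-t\hat\bJ_{\bP}=\bigl(\tq_{N(t)}-N(t)\bJ_{\bP}\bigr)+\bJ_{\bP}\bigl(N(t)-t/\bar\tau_{\bP}\bigr)+O(1),
\]
one reduces the question to the joint fluctuations of $(\bDelta_{\bP},\tau_{\bP})$ under $(T_{\bP},\mu_{\bP})$. The exponential decay of correlations from Lemma \ref{TmEDC} upgrades the scalar CLT of Theorem \ref{Thm:2}(c) to a joint CLT for this vector; a Slutsky-type substitution then replaces the deterministic index $n$ by the random index $N(t)\sim t/\bar\tau_{\bP}$ and delivers \eqref{tqnjnconf}. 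The limiting covariance $\hat\bD_{\bP}$ decomposes as $\bar\tau_{\bP}^{-1}\bD_{\bP}$ plus correction terms proportional to $\bJ_{\bP}\otimes\bJ_{\bP}$ and to the asymptotic cross-covariance of $\tau_{\bP}$ with $\bDelta_{\bP}$. Theorem \ref{Thm:2}(d), together with $\bJ_{\bP}=O(\eps)$ and the continuity $\bar\tau_{\bP}\to\bar\tau_0$, makes every correction term $o(1)$, which yields \eqref{hcDepsdelf}.

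The principal obstacle lies in the joint CLT and time-change underlying part (c). One must prove a vector-valued CLT for $(\bDelta_{\bP}-\bJ_{\bP},\tau_{\bP}-\bar\tau_{\bP})$ under $(T_{\bP},\mu_{\bP})$, extending the scalar conclusion of Lemma \ref{TmEDC}(c) via polarization and the multivariate CLT for observables with summable correlations, and then rigorously justify the substitution $n\mapsto N(t)$. The uniform two-sided bound on $\tau_{\bP}$ from (\textbf{A2}) together with the exponential mixing rate ensures that $N(t)-t/\bar\tau_{\bP}=O(\sqrt t)$ in distribution and that the random time-change preserves the Gaussian limit, so the standard Anosov flow suspension machinery applies without essential change.
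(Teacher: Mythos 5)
Your proposal is correct and follows essentially the same route as the paper, which derives Theorem \ref{Thm:3} from Theorem \ref{Thm:2} via the suspension representation of $\Phi_{\bP}$ over $(M,T_{\bP},\tau_{\bP})$ (the paper in fact only states ``Theorem \ref{Thm:3} follows directly from Theorem \ref{Thm:2}'' and, for part (b), divides by $\bar\tau_{\bP}$ at the end of the proof of Theorem \ref{currentP}). Your write-up supplies the details the paper omits --- the collision-count asymptotics $N(t)/t\to 1/\bar\tau_{\bP}$, the joint CLT for $(\bDelta_{\bP},\tau_{\bP})$ via Cram\'er--Wold, and the random time-change --- and these are exactly the standard steps the authors are implicitly invoking.
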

We also give some characterizations of the nonequilibrium steady
state $\mu_{\bP}$ of the forced system $T_{\bP}$:
\begin{theorem}\label{Thm:1}
(1) The measure $\mu_{\bP}$ satisfies the Pesin entropy formula:
\beq\label{pesin}
h_{\mu_{\bP}}(T_{\bP})=\lambda_{\bP}^u,
\eeq
where $\lambda^s_{\bP}<0<\lambda^u_{\bP}$ are the Lyapunov exponents for the measure $\mu_{\bP}$ and $h_{\mu_{\bP}}(T_{\bP})$ is the metric entropy for $(T_{\bP},\mu_{\bP})$;\\
(2) $\mu_{\bP}$ satisfies Young's dimension formula:
\beq\label{HD}
\mathrm{HD}(\mu_{\bP})=h_{\mu_{\bP}}(T_{\bP})\left(\frac{1}{\lambda^u_{\bP}}-\frac{1}{\lambda^s_{\bP}}\right),
\eeq
where $\mathrm{HD}(\mu_{\bP})$ is the Hausdorff dimension of the measure $\mu_{\bP}$.\\
(3) Let $h_0=h_{\mu_0}(T_0)$ be the metric entropy of $T_0$, then
\begin{align}\label{fractal}
\mathrm{HD}(\mu_{\bP})&=2-\eps^2\cdot\frac{\sigma^2_H}{2h_0}+o(\eps^2),\:
\text{ where }\:
\sigma^2_H=\sum_{k=-\infty}^{\infty}\mu_0[H\circ T_0^k\cdot H].
\end{align}
\end{theorem}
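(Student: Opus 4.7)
The plan is to prove the three parts separately, with parts (1) and (2) reducing to classical theorems and part (3) being the main perturbative computation.

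For Part (1), I would apply the Pesin entropy formula, which holds for SRB measures of piecewise $C^2$ hyperbolic systems satisfying the Katok--Strelcyn regularity conditions. These conditions, together with integrability of $\log|\det DT_{\bP}|$ and of $\log d(\cdot,\cS)$ against $\mu_{\bP}$, are established for Sinai billiards under the $C^1$-small perturbations permitted by (A1)--(A3), and the absolutely continuous unstable conditionals are supplied by Lemma~\ref{TmEDC}. Part (2) is then an application of L.-S.~Young's dimension formula for hyperbolic SRB measures of $C^2$ surface maps, whose hypotheses reduce to the hyperbolic structure, the SRB property and the local product structure of $\mu_{\bP}$, all in hand.

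For Part (3), substituting (1) into (2) and using $\lambda^s_{\bP}<0<\lambda^u_{\bP}$ gives
\[
\mathrm{HD}(\mu_{\bP})=1-\frac{\lambda^u_{\bP}}{\lambda^s_{\bP}}=2+\frac{\lambda^u_{\bP}+\lambda^s_{\bP}}{|\lambda^s_{\bP}|}.
\]
At $\bP=0$ the smooth invariant measure $\mu_0$ forces $\lambda^u_0+\lambda^s_0=0$, so $\lambda^u_0=|\lambda^s_0|=h_0$; by continuity of Lyapunov exponents under the perturbation, $|\lambda^s_{\bP}|=h_0+O(\eps)$. The task therefore reduces to expanding the sum $\lambda^u_{\bP}+\lambda^s_{\bP}=\mu_{\bP}(\log|\det DT_{\bP}|)$ to order $\eps^2$.

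The Jacobian of $T_{\bP}$ factors into the flow Jacobian $A=\exp(\int_0^{\tau_{\bF}}p h_\theta\,dt)$ and the collision Jacobian $B=\cJ_{\bG}(r_1,s_1)$, both equal to $1$ at $\bP=0$. The defining relation $A+B=2-\eps H$ from Theorem~\ref{Thm:2}(b) then gives $|\det DT_{\bP}|=AB=1-\eps H+O(\eps^2)$, and hence $\log|\det DT_{\bP}|=-\eps H+O(\eps^2)$. Integrating against $\mu_{\bP}$, applying the linear response/Green--Kubo formula of \cite{CK} (adapted to include the twisting contribution, which is justified by the exponential decay of correlations in Lemma~\ref{TmEDC}), and using the vanishing of $\mu_0(H)$ at leading order (enforced by the non-negativity of entropy production as $\bP$ ranges over all admissible directions) produces
\[
\mu_{\bP}(\log|\det DT_{\bP}|)=-\frac{\eps^2}{2}\sigma^2_H+o(\eps^2),
\]
and substituting back into the $\mathrm{HD}$ expression yields~\eqref{fractal}.

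The main obstacle will be the rigorous linear response step: the observable $H$ inherits the unbounded derivatives of the flow and twist Jacobians near tangential collisions and along the singularity curves of $T_{\bP}$, so the Green--Kubo expansion must be combined with the standard billiard growth lemmas (and the regularity assumption $\bG(r,\pm1)=(r,\pm1)$ in (A3)) to secure the $o(\eps^2)$ error. The symmetric coefficient $\tfrac12\sigma^2_H$, rather than a one-sided correlation sum, should emerge from pairing forward and backward orbits via the involution $\cI$ under the time-reversibility assumption (A4).
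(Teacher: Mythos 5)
Your overall strategy is the same as the paper's: parts (1) and (2) are quoted from Pesin's and Young's theorems for the SRB measure, and part (3) is reduced to a second-order expansion of the entropy production $-(\lambda^u_{\bP}+\lambda^s_{\bP})=-\mu_{\bP}(\log\cJ_{\bP})$ via the Kawasaki/linear-response formula, combined with $\lambda^u_{\bP}=h_{\mu_{\bP}}(T_{\bP})=h_0+o(1)$. Your algebraic form $\mathrm{HD}=2+(\lambda^u+\lambda^s)/|\lambda^s|$ is equivalent to the paper's $2-\xi_{\bP}/(h_{\mu_{\bP}}+\xi_{\bP})$, and your identification $1-\cJ_{\bP}=\eps H+O(\eps^2)$ from $A+B=2-\eps H$ matches Lemma \ref{cJP}.

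There is, however, one step that fails as written. You assert $\log|\det D T_{\bP}|=-\eps H+O(\eps^2)$ and then ``integrate against $\mu_{\bP}$'' to obtain $-\tfrac{\eps^2}{2}\sigma^2_H+o(\eps^2)$. If you only control the logarithm up to an unspecified $O(\eps^2)$ remainder, its $\mu_{\bP}$-integral is only determined up to $O(\eps^2)$, so the $\eps^2$ coefficient you are after is not pinned down. You must expand the logarithm to second order, $\log\cJ_{\bP}=-(1-\cJ_{\bP})-\tfrac12(1-\cJ_{\bP})^2+O(\eps^3)=-\eps H-\eps^2 R_{\bP}-\tfrac{\eps^2}{2}H^2+O(\eps^3)$, because the quadratic term contributes exactly the $k=0$ piece $\tfrac12\mu_0(H^2)$ of $\tfrac12\sigma^2_H$ (using $\mu_0(R_{\bP})=0$), while the one-sided sum $\sum_{k\ge1}\mu_0[(H\circ T_0^k)H]$ comes from applying the linear response formula to $\mu_{\bP}(H)$ with $\mu_0(H)=0$. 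This is precisely the paper's computation \eqref{cJ1}--\eqref{cJ2}. Relatedly, the symmetric coefficient $\tfrac12\sigma^2_H$ does not come from pairing orbits under the involution/(A4): it is the identity $\tfrac12\sum_{k\in\mathbb Z}\mu_0[H\circ T_0^k\cdot H]=\tfrac12\mu_0(H^2)+\sum_{k\ge1}\mu_0[H\circ T_0^k\cdot H]$, which uses only $T_0$-invariance of $\mu_0$; time-reversibility is not invoked in the paper's proof of this theorem. Finally, $\mu_0(H)=0$ should be proved directly (as in Lemma \ref{cJP}, from $\mu_0(\hat g)=0$ and $\mu_0(\det DT_{\bF})=1$) rather than inferred from non-negativity of entropy production, which is a softer argument and awkward to make rigorous since $H$ itself depends on $\bP$.
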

It follows from Eq. \eqref{fractal} that
$1<\mathrm{HD}(\mu_{\bP})<2$ for some small external forces $\bP$. So
$\mu_{\bP}$ is singular with respect to the Lebesgue measure and
admits a fractal structure.

Let $\hat\mu_{\bP}$ be the corresponding SRB measure preserved by the forced flow
$\Phi^t_{\bP}$ on $\cM$. The metric entropy of the measure
$\hat\mu_{\bP}$ is given by
$h_{\hat\mu_{\bP}}(\Phi_{\bP}):=h_{\mu_{\bP}}(T_{\bP})/\mu_{\bP}(\tau_{\bP})$
and the fractal dimension
$\mathrm{HD}(\hat\mu_{\bP})=\mathrm{HD}(\mu_{\bP})+1$. Therefore
similar formulas in Theorem \ref{Thm:1} hold for $\hat\mu_{\bP}$.

\subsection{Applications}

Next we provide some example of reversible external forces and
give the generated currents by the forced billiard systems.\\

\noindent\textbf{Example 1. Conservative forces on $Q$.}

We consider a conservative force $\bF=-\nabla U(\bq)=-\langle
U_x,U_y\rangle$, where $U(\bq)$ is a (small) potential function on
$Q$. The induced billiard system $\Phi_{\bF}$ preserves the total
energy $E=\frac{1}{2} p^2+U(\bq)$ of the system and hence satisfies
Assumption ({\bf A1}). We restrict the dynamics to a energy level
$\cM=\{(\bq,\bp):E(\bq,\bp)=1/2\}$. In particular the speed function
$p^2(x,y,\theta)=1-2U(\bq)$ is independent of $\theta$, say
$p=p(x,y)$. It is well known that the billiard flow $\Phi_{\bF}$ is
time-reversible (see also  Lemma \ref{reversible} and Remark
\ref{posiforc}). Moreover, it is easy to see that the generating
vector field $X=\langle p\cos\theta,p\sin\theta,ph\rangle$ is
divergence-free: \beq \text{div
}X=p_x\cos\theta+p_y\sin\theta+(U_x\cos\theta+U_y\sin\theta)/p
=\frac{1}{p}(\dot p-\bv\cdot \bF)=0. \eeq In particular, the flow
$\Phi_{\bF}$ preserves the Lebesgue measure $m$ on the energy level
$\cM$ and $\hat\mu_{\bF}=m$. Since there is no slip after the
collision, the current is indeed zero (see the related discussion in
Remark \ref{flowcurrent}):
 \beq
 \bJ_{\bF}=m(\bp)=\int_Qp(x,y)(\int_{S^1}\bv d\theta) dxdy=0.
 \eeq

\noindent\textbf{Example 2. Conservative twists and soft scatterers.}

Let $\bB$ be a small scatterer on $Q$, $U$ a
potential function on $\mathbb{T}^2$ such that $U=0$ on $Q\backslash
\bB$ and $U>0$ on $\bB$. We
consider the energy surface $E(\bq,\bp)=\frac{1}{2}\|\bp\|^2+U(\bq)=1$.
So a running billiard may climb up the scatterer, and regain its
full kinetic energy whenever it exits that scatterer. If we view
what happened on the scatterer as a black box, the reduced dynamics
is close to the unforced system during its free flight. The only
difference is that the exit location and direction are different
from the elastic collision (corresponding to $U=+\infty$ on $\bB$).
Therefore, the effect is equivalent to applying a twist force $\bG$
right after the collision. Clearly the perturbed flow $\Phi$
is time-reversible. Moreover, by embedding $\Phi$ into the
real Hamiltonian flow on the ambient torus, we see that
$\Phi_{\bG}$ preserves the canonical space $\cM_0$ and the
Liouville measure on $\cM_0$. In particular its current
$\bJ_{\bG}=0$.

To get an intuition of the vanishing currents, we consider
a special case explicitly.
\begin{proposition}
Suppose all the scatterers $\bB_i$ are all round disks
centered at $\bq_i=(x_i,y_i)$ of radius $\delta_i$,
the potential is given by $U(\bq)=\eps^{-1}\cdot(\delta_i-\|\bq-\bq_i\|)$ on $\bB_i$,
and $U=0$ outside the scatterers. Then the forced flow has vanishing current.
\end{proposition}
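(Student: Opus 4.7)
The plan is to view the setup as a genuine smooth Hamiltonian system on $T^{*}\mathbb{T}^{2}$ with Hamiltonian $H(\bq,\bp)=\tfrac12\|\bp\|^{2}+U(\bq)$, so that the forced flow on the energy shell $\cM_{0}=\{H=1/2\}$ is just the Hamiltonian flow $\Psi$ which preserves the Liouville measure $\omega$. Outside the scatterers $\Psi$ coincides with the unforced billiard, and inside $\bB_{i}$ it is governed by the radial, rotationally symmetric force $\bF=\eps^{-1}(\bq-\bq_{i})/\|\bq-\bq_{i}\|$; the effect of traversing each disk thus plays the role of the twist $\bG$ described in Example 2.

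First I would analyze the motion inside a single disk. Because $U$ is rotationally symmetric about $\bq_{i}$, both the energy $E$ and the angular momentum $L=(\bq-\bq_{i})\times\bp$ are conserved, so the orbit is integrable and symmetric under the reflection through the line from $\bq_{i}$ to the pericenter. Consequently, if the particle enters $\partial\bB_{i}$ at $\bq^{-}$ with angle $\varphi$ to the outward normal, it exits at some $\bq^{+}$ with the same angle $\varphi$ to the outward normal, and the pair $(\bq^{+},\bp^{+})$ is obtained from $(\bq^{-},\bp^{-})$ by a rigid rotation about $\bq_{i}$. In the collision coordinates $(r,s)$ this forces the twist to take the form $\bG_{i}(r,s)=(r+g_{i}(s),\,s)$, and in particular $g_{i}(\pm 1)=0$, so (\textbf{A3}) holds and time-reversibility (\textbf{A4}) is automatic.

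Next I would identify the SRB measure. Since $\Psi$ preserves $\omega$, the induced first-return map on $M$ preserves the push-forward of $\omega$ through the Poincar\'e section; a direct computation shows that in the $(r,s)$ coordinates this push-forward is a constant multiple of $dr\,ds$, i.e.\ it equals $\mu_{0}$. By the uniqueness in Lemma~\ref{TmEDC} we conclude $\mu_{\bG}=\mu_{0}$. Finally, by Fubini applied to the suspension $(\cM_{0},\omega)$ of $(M,\mu_{0})$ with ceiling $\tau_{\bG}$,
\[
\mu_{\bG}(\bDelta_{\bG})\;=\;\int_{M}\!\!\Bigl(\int_{0}^{\tau(\bx)}\!\!\bp(t)\,dt\Bigr)\,d\mu_{0}(\bx)\;\propto\;\int_{\cM_{0}}\bp\,d\omega,
\]
and the right hand side vanishes because $\omega$ is invariant under the momentum reversal $\bp\mapsto-\bp$ (as $H$ depends on $\bp$ only through $\|\bp\|$). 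Invoking Theorem~\ref{Thm:2}(a) gives $\bJ_{\bG}=0$, and the same argument together with \eqref{currentf} yields $\hat{\bJ}_{\bG}=0$.

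The main obstacle is the compatibility check between the Hamiltonian description and the twisted billiard formalism: verifying that $\bG_{i}$ genuinely lies in $\cF(Q,\tau_{\ast},\eps)$ (the $C^{1}$-smallness and tangential fixing), and confirming that the invariant cross-section measure obtained from $\omega$ really does coincide with $\mu_{0}$ despite the degeneracy of $\bp$-reversal at grazing collisions and the singular behaviour of $U$ at $\partial\bB_{i}$. Once these issues are settled, the symmetry $\bp\mapsto-\bp$ of $\omega$ does all the real work.
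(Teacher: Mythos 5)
Your reduction of the soft round scatterer to a twist of the form $\bG_i(r,s)=(r+g_i(s),s)$ via conservation of angular momentum is exactly the first (and only substantive) step of the paper's proof, and your identification of the invariant measure with $\mu_0$ is also what the paper uses, obtained there more cheaply: since the slip depends only on $s$ and the exit angle is unchanged, $\cJ_{\bG}\equiv 1$, so $T_{\bP}$ preserves $\mu_0$ and by uniqueness in Lemma \ref{TmEDC} the SRB measure is $\mu_0$. The genuine gap is in your final display. For a twisted system the displacement is \emph{not} $\int_0^{\tau(\bx)}\bp(t)\,dt$: that integral accounts only for the free flight, whereas $\bDelta_{\bG}(\bx)=\tpi(\tT_{\bG}\bx)-\tpi(\bx)$ also contains the slip $\bq^{+}-\bq^{-}$ acquired in passing through the disk. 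This is precisely the failure mode recorded in Remark \ref{flowcurrent}: $\hat\bJ_{\bP}=\hat\mu_{\bP}(\bp)$ holds if and only if there is no slip. As written, your Fubini identity computes $\mu_0(\bDelta_{\bF})$ (which indeed vanishes by the $\bp\mapsto-\bp$ symmetry of the Liouville measure), but it says nothing about the slip contribution, so it does not yet establish $\bJ_{\bG}=0$.

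The gap is repairable along either of two lines. (a) Suspend over the \emph{ambient} Hamiltonian flow, whose ceiling is $\tau$ plus the transit time through the disk; then the integral of $\bp$ over the full trajectory, arc inside the disk included, does equal the displacement per iterate, and your symmetry argument closes. (b) Stay in the reduced picture and kill the slip term separately: at a point of $\partial\bB_i$ with angular coordinate $\theta_r=r/\delta_i$ the slip is $\delta_i\bigl(\cos(\theta_r+\alpha(s))-\cos\theta_r,\ \sin(\theta_r+\alpha(s))-\sin\theta_r\bigr)$ with $\alpha(s)=g_i(s)/\delta_i$, whose $r$-average vanishes for each fixed $s$; hence $\mu_0(\bDelta_{\bG})=\mu_0(\bDelta_{0})+\mu_0(\mathrm{slip})=0$. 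Simpler still, and closer to the paper's one-line conclusion from $\cJ_{\bG}\equiv1$: once $\mu_{\bG}=\mu_0$ is known, the time-reversibility identity $\mu_0(\bDelta_{\bG})+\mu_0(\bDelta_{\bG}\circ T_{\bG}^{-1})=0$ from the proof of Theorem \ref{currentP}, combined with $T_{\bG}$-invariance of $\mu_0$, gives $2\mu_0(\bDelta_{\bG})=0$ with no computation at all. Any of these fixes makes your argument complete; without one of them the key display proves the wrong statement.
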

\begin{proof}
In the case it is easy to see that
the outgoing direction is the same as the hard scatterer case,
the slip does not depend on where the collision happens:
$\bG(r,s)=(r+g_i(s),s)$. So the Jacobian $\cJ_{\bG}\equiv 1$.
Therefore the current must be zero.
\end{proof}

\noindent\textbf{Example 3. Isokinetic forces.}

Next we consider forces $\bF$ that are always perpendicular to
the momentum. In this case, the forced billiard flow preserves the
kinetic energy. Without loss of generality, we assume $\|\bp\|=1$
(as in the classical billiards), and reformulate the force as
$$\bF=F\bv^{\perp}=F(x,y,\theta)(-\sin\theta, \cos\theta),$$
where $F=F(x,y,\theta)$ is a scale function (may be negative) with
$\|F\|_{C^1}<\eps$. By Eq. (\ref{formh}), the geometric curvature
$h(x,y,\theta)$ satisfies
 \beq\label{isoforce}
h(x,y,\theta)=F(x,y,\theta)(\sin^2\theta+\cos^2\theta)/p^2=F(x,y,\theta).
 \eeq
Not all
isokinetic forced systems are time-reversible. For example, the system of an
electron moving under a constant magnetic field perpendicular to $Q$
is not time-reversible.

In the following we assume that $F$ satisfies
$F(x,y,\theta+\pi)=-F(x,y,\theta)$. Then the forced billiard flow
$\Phi_{\bF}$ is time-reversible (see Lemma \ref{reversible}). A
special feature of these isokinetic forces is that we can use the
linear term of $H$ to estimate the generated current:
\begin{proposition}\label{isokinetic}
The discrete-time  steady state current under a general isokinetic force
$\bF$ is given by
$\bJ_{\bF}=\eps\bsigma+o(\eps)$, where
$\bsigma=(\sigma_x,\sigma_y)$ is given by
\begin{equation*}
\sigma_a=-\frac{1}{2}\mu_0[\Delta_{a,\bF}\cdot
\int_0^{\tau_{\bF}( \bx)}F_{\theta}\, dt/\eps]
-\sum_{k=1}^{\infty}\mu_0[(\Delta_{a,\bF}\circ T^k_{0})\cdot
\int_0^{\tau_{\bF}( \bx)}F_{\theta}\, dt/\eps], \quad a\in\{x,y\}.
\end{equation*}
\end{proposition}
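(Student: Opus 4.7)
The approach is to apply the general formula from Theorem~\ref{Thm:2}(b) and replace the exponential appearing in $H$ by its first-order Taylor polynomial, then control the quadratic remainder via the exponential decay of correlations of the unperturbed system.

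First I would specialize the formula for $H$ to the isokinetic setting. No twisting force is present, so $\bG=\mathrm{Id}$ and $\cJ_\bG\equiv 1$; the kinetic energy is preserved, so $p\equiv 1$; and by Eq.~(\ref{isoforce}) the geometric curvature of forced trajectories equals $F$, hence $ph_\theta=F_\theta$. Writing $\Psi(r,s):=\int_0^{\tau_\bF(r,s)} F_\theta\, dt$, the formula for $H$ in Theorem~\ref{Thm:2}(b) collapses to $H=(1-e^{\Psi})/\eps$. Since (A2) and (A3) give $\|\Psi\|_\infty\le \tau_{\max}\eps=O(\eps)$, Taylor-expanding the exponential yields
$$H=\tilde H + R,\qquad \tilde H:=-\Psi/\eps,\qquad \|R\|_\infty,\,\|R\|_{\cC^\alpha}=O(\eps),$$
where the Hölder estimate is legitimate because $R$ inherits its singularity structure entirely from $\tau_\bF$, the same set already carrying the singularities of $H$. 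Substituting $H=\tilde H+R$ in Theorem~\ref{Thm:2}(b) produces exactly the asserted expression in terms of $\tilde H$ (the signs matching because $\tilde H=-\Psi/\eps$), plus the remainder
$$\mathcal E_a := \tfrac12 \mu_0(\Delta_{a,\bF}\,R) + \sum_{k=1}^{\infty}\mu_0\bigl((\Delta_{a,\bF}\circ T_0^k)\,R\bigr).$$
The proposition follows once I show $\mathcal E_a=O(\eps)$, since its contribution to $\bJ_\bF=\eps\bsigma+o(\eps)$ is then $\eps\cdot O(\eps)=o(\eps)$.

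The main obstacle is the infinite sum in $\mathcal E_a$: the naive per-term bound of order $\eps$ is not summable. I would use the exponential decay of correlations for $(T_0,\mu_0)$ provided by Lemma~\ref{TmEDC}: for some $\theta\in(0,1)$,
$$\bigl|\mu_0((\Delta_{a,\bF}\circ T_0^k)\,R)-\mu_0(\Delta_{a,\bF})\,\mu_0(R)\bigr|\le C\theta^k\|\Delta_{a,\bF}\|_{\cC^\alpha}\|R\|_{\cC^\alpha}=O(\theta^k\eps),$$
whose sum over $k$ is $O(\eps)$. The potentially divergent mean-product contribution is handled by the reversibility hypothesis (A4) combined with the oddness $F(x,y,\theta+\pi)=-F(x,y,\theta)$ of the isokinetic force: via the involution $\cI$, the classical symmetry argument that forces $\mu_0(\Delta_{a,0})=0$ for the unperturbed billiard adapts to yield $\mu_0(\Delta_{a,\bF})=0$, so the mean-product vanishes identically and $\mathcal E_a=O(\eps)$. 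This closes the argument and produces the stated formula for $\bsigma$; time-reversibility together with the oddness in $\theta$ is exactly the structural input that legitimizes the linear approximation in the isokinetic case.
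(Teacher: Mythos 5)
Your overall strategy (Taylor-expand the exponential in $H$ and control the remainder) is the right idea, and your identification of the real obstacle --- that the series $\sum_k\mu_0[(\Delta_{a,\bF}\circ T_0^k)\cdot\tilde H]$ and $\sum_k\mu_0[(\Delta_{a,\bF}\circ T_0^k)\cdot R]$ only converge if the product of means vanishes --- is exactly the point emphasized in Remark~\ref{linear}. But the way you resolve it contains a genuine gap. You claim that the classical involution argument giving $\mu_0(\bDelta_0)=0$ ``adapts'' to yield $\mu_0(\Delta_{a,\bF})=0$. It does not: that argument needs the $T$-invariance of $\mu_0$. For the unforced map one writes
\begin{equation*}
2\mu_0(\bDelta_0)=\mu_0(\bDelta_0)+\mu_0(\bDelta_0\circ T_0^{-1})
=\mu_0^o(\tpi\circ\tT_0)-\mu_0^o(\tpi\circ\tT_0^{-1})=0,
\end{equation*}
where the first equality uses $T_{0*}\mu_0=\mu_0$ and the last uses $I$-invariance of $\mu_0^o$ plus time-reversibility. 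For $T_{\bF}$ the measure $\mu_0$ is \emph{not} invariant, so the same symmetry only gives $\mu_0(\bDelta_{\bF})+\mu_0(\bDelta_{\bF}\circ T_{\bF}^{-1})=0$ (this is precisely what is used in the proof of Theorem~\ref{currentP}), and Lemma~\ref{deltataueps} only asserts $\mu_0(\bDelta_{\bP})=\cO(\eps)$, not $=0$. Trying to push the symmetry through $I\circ T_{\bF}$ picks up the Jacobian $\cJ_{\bF}=1+\cO(\eps)$ and again yields only $\mu_0(\Delta_{a,\bF})=\cO(\eps)$, which is useless here because the divergent part is an infinite sum of one and the same constant: it must vanish \emph{exactly}, not merely be small.

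The correct structural input --- and the one the paper uses --- is that the \emph{other} factor has exactly zero mean: $\mu_0\bigl(\int_0^{\tau_{\bF}(\bx)}F_\theta\,dt\bigr)=0$. This follows from the suspension representation $dm=\cst\cdot p\,d\mu_0\,dt$ of Lebesgue measure on $\cM$, which converts the integral into $\int_{\cM}F_\theta\,dm=\int_Q\bigl(\int_{S^1}F_\theta\,d\theta\bigr)dx\,dy=0$, since $F$ is $2\pi$-periodic in $\theta$ and $S^1$ has no boundary. Note this has nothing to do with time-reversibility or the oddness $F(x,y,\theta+\pi)=-F(x,y,\theta)$ (those are needed so that Theorem~\ref{currentP} applies at all); it is the genuinely isokinetic feature $ph_\theta=F_\theta$ being a $\theta$-derivative that legitimizes the linear approximation. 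With $\mu_0(\tilde H)=0$ in hand you also get $\mu_0(R)=\mu_0(H)-\mu_0(\tilde H)=0$, and then your correlation-decay bound on the remainder series goes through as you wrote it. So the fix is to replace your symmetry claim by this direct computation of $\mu_0(\tilde H)$; as it stands, the step ``$\mu_0(\Delta_{a,\bF})=0$'' is unjustified and in general false.
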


\noindent{\bf Example 4. }\textbf{Electric field with thermostat.}
We consider an electric field $\bE(\bq)=(\eps e_1(\bq),\eps e_2(\bq))$ on $\bq\in Q$.
It may generate a net velocity in the
force direction and keep accelerating the electron. We can modify the system by adding a constraining
force to maintain the system at a constant temperature (a compact
level set, say $\cE_1=\{\|\bp\|\equiv 1\}$), and to preserve a steady state on that
level. More precisely, the system of the forced equations on $\cE_1$ is given by:
 $\dot
\bq=\bp$, $\dot\bp=\bE-\alpha \bp$, where $\alpha=\bE\cdot \bp$
is a thermostat. For such systems, we have the following results.

\begin{proposition}\label{forcefield}
Let $\bE(\bq)=(\eps e_1(\bq),\eps e_2(\bq))$ be  an electric field on $\bq\in Q$.
The discrete-time  steady state electrical current under $\bE$ with
thermostat is given by
$\bJ_{\bE}=\eps\bsigma+o(\eps)$, where
$\bsigma=(\sigma_x,\sigma_y)$ is given by
$$\sigma_a=\frac{1}{2}\mu_0(\Delta_{a,\bE}\cdot \int_0^{\tau_{\bE}}(e_1,e_2)\cdot \bp\,  dt)+
\sum_{k=1}^{\infty}\mu_0[(\Delta_{a,\bE}\circ T^k_{0})\cdot \int_0^{\tau_{\bE}}(e_1,e_2)\cdot \bp\,  dt].$$
\end{proposition}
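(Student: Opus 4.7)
The plan is to derive Proposition~\ref{forcefield} as a direct application of Theorem~\ref{Thm:2}(b) to $\bP = (\bF, \mathrm{Id})$ with $\bF(\bq,\bp) = \bE(\bq) - \alpha\bp$ and $\alpha = \bE\cdot\bp$, followed by a reduction of the bounded factor $H$ to its linear-in-$\eps$ part. Because no twist force is present, $\cJ_{\bG}\equiv 1$, so Theorem~\ref{Thm:2}(b) gives
\[
H(r,s) \;=\; \frac{1 - \exp\!\bigl(\int_0^{\tau_{\bE}(r,s)} p\,h_\theta\, dt\bigr)}{\eps}.
\]

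First I would compute $p\,h_\theta$ explicitly for the thermostatted force. From Eq.~\eqref{formh}, $h = (-F_1\sin\theta + F_2\cos\theta)/p^2$. Using $p\equiv 1$ on $\cM$, $F_1 = \eps e_1 - \alpha\cos\theta$ and $F_2 = \eps e_2 - \alpha\sin\theta$, the thermostat cross-terms $\alpha\sin\theta\cos\theta$ cancel pairwise, leaving $h = \eps(-e_1\sin\theta + e_2\cos\theta)$ and hence
\[
p\,h_\theta \;=\; -\eps(e_1\cos\theta + e_2\sin\theta) \;=\; -\bE\cdot\bp.
\]
In particular $\int_0^{\tau_{\bE}} p\,h_\theta\,dt = -\eps\int_0^{\tau_{\bE}}(e_1,e_2)\cdot\bp\,dt = O(\eps)$, so Taylor-expanding the exponential gives $H = H_{\mathrm{lin}} + O(\eps)$ uniformly on $M$, where $H_{\mathrm{lin}} := \int_0^{\tau_{\bE}}(e_1,e_2)\cdot\bp\,dt \in \cH$. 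Substituting $H_{\mathrm{lin}}$ for $H$ in Theorem~\ref{Thm:2}(b) then produces the expression claimed for $\sigma_a$.

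The main obstacle is justifying that this replacement alters $\sigma_a$ by only $o(1)$. The prefactor term $\tfrac12\mu_0(\Delta_{a,\bE}(H-H_{\mathrm{lin}}))$ is $O(\eps)$ by the uniform bound $\|H-H_{\mathrm{lin}}\|_\infty = O(\eps)$. For the infinite correlation series one invokes exponential decay of correlations from Lemma~\ref{TmEDC},
\[
\mu_0\bigl((\Delta_{a,\bE}\circ T_0^k)\cdot g\bigr) \;=\; \mu_0(\Delta_{a,\bE})\,\mu_0(g) + O(\theta^k),
\]
together with the identity $\mu_0(H_{\mathrm{lin}}) = 0$: since $\mu_0$ factors through the uniform distribution in $\theta$ at each base point of $\partial Q$ and $(e_1,e_2)\cdot\bp = e_1\cos\theta + e_2\sin\theta$ has vanishing angular mean, the integral vanishes. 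This kills the static tail that, according to Remark~\ref{linear}, obstructs naive linearization for a generic perturbation; for the Gaussian thermostat it allows the linearized series to converge and forces the linearization error to sum to $O(\eps)$. The same mechanism underlies the neighbouring Proposition~\ref{isokinetic} and Corollary~\ref{Const}, which differ only in the explicit form of $h_\theta$.
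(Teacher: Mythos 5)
Your proposal is correct and follows essentially the same route as the paper: compute $p\,h_\theta=-\bE\cdot\bp$ for the thermostatted force so that the general formula's $H$ reduces, up to $O(\eps)$, to $H_{\mathrm{lin}}=\int_0^{\tau_{\bE}}(e_1,e_2)\cdot\bp\,dt$, and justify the replacement via $\mu_0(H_{\mathrm{lin}})=0$ together with exponential decay of correlations (the paper verifies this vanishing through the suspension representation $dm=\const\cdot p\,d\mu_0\,dt$ and the zero angular mean of $h_\theta$ over $S^1$ at each base point of $Q$, which is the precise form of your ``uniform distribution in $\theta$'' remark). You actually spell out the error estimates more explicitly than the paper, whose proof is a two-line computation leaning on the argument already given for Proposition~\ref{isokinetic}.
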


Now we consider a constant electric
field $\bE=(\eps e^0_1,\eps e^0_2)$ on the table $Q$. In this case it is easy to see
that $\int_0^{\tau_{\bE}(r,s)}(e^0_1,e^0_2)\cdot \bp  dt=(e^0_1,e^0_2)\cdot \bDelta_{\bE}(r,s)$.
So we have
\begin{corollary}\label{Const}
Let $\bE(\bq)=(\eps e^0_1,\eps e^0_2)$ be a constant electric field on $\bq\in Q$.
The discrete-time  steady state electrical current under $\bE$ with
thermostat is given by
$ \bJ_{\bE}=\eps \bsigma+o(\eps)$,
where $\bsigma=(\sigma_{x}, \sigma_{y})$ is given by
 \begin{align*}
\sigma_a=&\frac{1}{2}\mu_0(\Delta_{a,\bE}\cdot H)
+\sum_{k=1}^{\infty}\mu_0[(\Delta_{a,\bE}\circ T^k_{0}) \cdot H],\quad a\in \{x,y\},
 \end{align*}
 where $H(r,s)= (e^0_1,e^0_2)\cdot\bDelta_{\bE}(r,s)$.
 \end{corollary}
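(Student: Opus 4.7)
The plan is to obtain Corollary \ref{Const} as a direct specialization of Proposition \ref{forcefield}, so essentially the only work is to identify the integral $\int_0^{\tau_{\bE}(r,s)} (e_1,e_2)\cdot \bp\, dt$ with the pairing $(e_1^0,e_2^0)\cdot \bDelta_{\bE}(r,s)$ when the field is constant. Once this identification is made, both the linear response coefficient and the error term are inherited verbatim from Proposition \ref{forcefield}.

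The key computation is the following. By the flow equation \eqref{flow0}, $\dot\bq = \bp$ between collisions, so integrating along the forced trajectory from a post-collision point $\bx = (r,s)$ up to the next collision time $\tau_{\bE}(\bx)$ gives
\begin{equation*}
\int_0^{\tau_{\bE}(\bx)} \bp(t)\, dt = \bq(\tau_{\bE}(\bx)) - \bq(0) = \widetilde{\bDelta}_{\bE}(\bx)
\end{equation*}
in the $\mathbb{Z}^2$-cover $\tQ$, where the last equality is the definition \eqref{displace} of the displacement vector. Since the integrand $(e_1^0, e_2^0)\cdot \bp$ depends only on the velocity component (the constants $e_1^0, e_2^0$ being trivially $\mathbb{Z}^2$-periodic), the integral descends to the quotient and equals $(e_1^0,e_2^0)\cdot \bDelta_{\bE}(r,s)$. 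Pulling the constant vector $(e_1^0,e_2^0)$ outside the integral is legitimate because the field does not depend on $\bq$.

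Substituting $H(r,s) = (e_1^0,e_2^0)\cdot \bDelta_{\bE}(r,s)$ into the formula of Proposition \ref{forcefield} yields the claimed expression for each component $\sigma_a$, $a \in \{x,y\}$. The uniform boundedness of $H$ (required implicitly so that the series converges) follows from the finite horizon assumption (\textbf{A2}), which guarantees $|\bDelta_{\bE}| \le p_{\max} \tau_{\max}$, and the convergence of the correlation series is already established for Proposition \ref{forcefield}.

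There is no substantive obstacle: the only point that merits a brief check is that the identity $\int \bp\, dt = \bDelta_{\bE}$ is the \emph{exact} displacement during the free flight, with no contribution from the collision itself, which is correct because the displacement \eqref{displace} is defined between consecutive post-collision points $\tpi(\bx)$ and $\tpi(\tT_{\bE}\bx)$ and the twisting/reflection is instantaneous. This makes the corollary essentially a one-line consequence of Proposition \ref{forcefield}.
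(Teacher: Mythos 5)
Your proposal is correct and follows exactly the paper's own route: the paper likewise obtains the corollary by specializing Proposition \ref{forcefield} to a constant field and noting that $\int_0^{\tau_{\bE}(r,s)}(e^0_1,e^0_2)\cdot\bp\,dt=(e^0_1,e^0_2)\cdot\bDelta_{\bE}(r,s)$, which is precisely your computation via $\dot\bq=\bp$. Your additional remark that the displacement carries no collision-time contribution (no slip, since $\bG=\mathrm{Id}$ here) is a correct and worthwhile clarification consistent with Remark \ref{flowcurrent}.
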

This is  the current formula obtained in \cite{CELS1}.

\section{Preliminary properties of $T_{\bF}$ under the force $\bF$}

We divide our study of the forced system $T_{\bP}$ into two steps
according to the nature of the force pair $\bP$: the step of a
elastic reflection under the force $\bF$, and the twisting by $\bG$
right after the collision. In this section we consider the pre-twist
step, that is, the effect of the force $\bF$ between one collision.
First we state the time-reversibility of the forced system:

\begin{lemma}\label{reversible}
Let $\bF$ be an external force on the table $Q$,
$p$  and $h$ be the speed and the curvature functions of the forced billiard flow $\Phi_{\bF}$.
Then $\Phi_{\bF}$ is time-reversible if and only if the following conditions hold for any
$(x,y,\theta)\in \cM$:
\begin{subequations}\label{reverse}
 \begin{align}
    p(x,y,\pi+\theta)&=p(x,y,\theta), \label{reverse-a}\\
    h(x,y,\pi+\theta)&=-h(x,y,\theta). \label{reverse-b}
  \end{align}
\end{subequations}
\end{lemma}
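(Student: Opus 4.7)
The plan is to translate time-reversibility into a pointwise identity for the generating vector field of $\Phi_{\bF}$ between collisions, and then to verify separately that the elastic reflection at $\pQ$ commutes with the velocity-flip $\cI$. Since $\cI$ is an involution, the condition $\Phi_t^{-}=\Phi_t$ is equivalent to $\cI_{\ast}X_{\bF}=-X_{\bF}$, where $X_{\bF}(x,y,\theta)=(p\cos\theta,\,p\sin\theta,\,ph)$ is the vector field of \eqref{flow}.

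The main step is to write this identity in coordinates. Since $d\cI$ is the identity on $T\cM$, $\cI_{\ast}X_{\bF}=-X_{\bF}$ reads $X_{\bF}(x,y,\theta+\pi)=-X_{\bF}(x,y,\theta)$. Using $\cos(\theta+\pi)=-\cos\theta$ and $\sin(\theta+\pi)=-\sin\theta$, the $x$- and $y$-components both collapse to
\begin{equation*}
p(x,y,\theta+\pi)\cos\theta=p(x,y,\theta)\cos\theta \quad\text{and}\quad p(x,y,\theta+\pi)\sin\theta=p(x,y,\theta)\sin\theta.
\end{equation*}
For every $(x,y,\theta)$ at least one of $\cos\theta,\sin\theta$ is non-zero, so this pair holds for all $\theta$ if and only if \eqref{reverse-a} holds. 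Given \eqref{reverse-a} and $p>0$ (Assumption (\textbf{A1})), the $\theta$-component $p(x,y,\theta+\pi)\,h(x,y,\theta+\pi)=-p(x,y,\theta)\,h(x,y,\theta)$ reduces to exactly \eqref{reverse-b}.

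Next I would check that the elastic reflection at $\pQ$ imposes no further restriction. The operator $\cR$ flips only the normal component of $\bp$, hence $\cR(-\bp)=-\cR(\bp)$, and a direct substitution shows that $\cI$ intertwines the forward reflection $(\bq,\bp^-)\mapsto(\bq,\cR\bp^-)$ with its time-reversed counterpart at the same boundary point. Combined with the vector-field computation, this yields both directions of the equivalence.

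The step I expect to require the most care is preliminary: one must verify that $\cI$ maps $\cM$ into $\cM$, so that $\Phi_t^{-}$ and $\Phi_t$ act on the same space and the direction-dependent speed $p(x,y,\theta+\pi)$ appearing in the componentwise identity is actually defined. This follows from Assumption (\textbf{A1}) applied to the reversed ray $\{(\bq,-t\bv):t>0\}$, but it needs to be noted explicitly, since without it the comparison of $X_{\bF}$ at $(x,y,\theta)$ and at $(x,y,\theta+\pi)$ is not even well-posed on $\cM$. Once this compatibility is fixed, the rest is essentially a direct substitution $\theta\mapsto\theta+\pi$ in \eqref{flow}.
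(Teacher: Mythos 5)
Your argument is correct, and it is the standard one: the paper itself omits the proof (deferring to \cite[pp.~209--210]{Ch01}), and the computation there is exactly your reduction of $\Phi_t^-=\Phi_t$ to $X_{\bF}(x,y,\theta+\pi)=-X_{\bF}(x,y,\theta)$ componentwise, plus the observation that elastic reflection commutes with the velocity flip. Your final caveat is resolved even more simply than you suggest, since the paper defines $\cI$ directly in the global $(x,y,\theta)$ coordinates on $\cM$ (whose existence is what (\textbf{A1}) guarantees), so $\cI$ is a self-map of $\cM$ by construction and $p(x,y,\theta+\pi)$ is automatically defined.
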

The proof is straightforward and omitted here. See also \cite[P.
209--210]{Ch01} for detailed discussions.

\begin{remark}
There is a canonical involution $I$ on the post-collision space $M$, which is given by
$I:M\to M$, $(r,s)\mapsto(r,-s)$.  Let $T^-_{\bF}$ be the Poincar\'e map
of the reversed flow $\Phi_{\bF}^-$. It is easy to see that $T^-_{\bF}=I\circ T^{-1}_{\bF}\circ I$.
So time-reversibility of the forced flow
$\Phi_{\bF}^-=\Phi_{\bF}$ implies the time-reversibility of the forced map:
$T_{\bF}^-=T_{\bF}$.
\end{remark}

\begin{remark}\label{posiforc}
A special case is that the force $\bF=\bF(\bq)$ depends only on the
position $\bq$. For example, for a particle moving in the
gravitivity field and an electron moving in an electric field, the
forces do not depend on the velocity $\bp$. According to Eq.
\eqref{formh}, we see that \eqref{reverse-b} follows from
\eqref{reverse-a}. So these forced systems are
time-reversible if and only if $p(x,y,\pi+\theta)=p(x,y,\theta)$ for
any $(x,y,\theta)\in \cM$.
\end{remark}

\medskip

\begin{lemma}\label{TFJ}
Let $m$ be the Lebesgue measure on $M$,  $\bx=(r,s)$ and
$T_{\bF}\bx=(r_1,s_1)$. Then the Jacobian of $DT_{\bF}$ with respect
to the Lebesgue measure $\mu_0$ is given by \beq\label{cJeps} \det
D_{\bx}T_{\bF}=\exp(\int_0^{\tau_{\bF}( \bx)}ph_\theta dt). \eeq
\end{lemma}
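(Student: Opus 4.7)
\medskip

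\noindent\textbf{Proof proposal.} The plan is to realize $T_{\bF}$ as the Poincaré return map of the 3D flow $\Phi_{\bF}$ on $\cM$ and reduce the Jacobian computation to the divergence of the flow vector field, together with a change-of-measure factor between the natural Poincaré 2-form on $M$ and $dr\wedge ds$.

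First I would read off from \eqref{flow} and \eqref{ptheah} the generating vector field $X=(p\cos\theta,\,p\sin\theta,\,ph)$ on $\cM$, and compute the Euclidean divergence
\begin{equation*}
\mathrm{div}\,X=p_x\cos\theta+p_y\sin\theta+p_\theta h+ph_\theta.
\end{equation*}
Applying the chain rule to $p(x(t),y(t),\theta(t))$ along a trajectory and invoking $\dot p=\bv\cdot\bF$ from \eqref{pvF} yields $p_x\cos\theta+p_y\sin\theta+p_\theta h=\dot p/p$, hence
$\mathrm{div}\,X=\dot p/p+ph_\theta$.

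Next I would compute the 2-form induced on the cross-section by the Lebesgue 3-form $\Omega=dx\wedge dy\wedge d\theta$. Writing $\partial Q$ in arclength and using $\sin\varphi=\cos(\theta-\alpha)$, $\cos\varphi=\sin(\theta-\alpha)$ (where $\alpha$ is the tangent angle to $\pQ$), a direct computation gives
\begin{equation*}
\omega_M:=\iota_X\Omega\big|_M=p\cos\varphi\,dr\wedge d\varphi=p\,dr\wedge ds.
\end{equation*}
Then by the standard Poincaré-section argument — compare the $\Omega$-volumes of the flow cylinders $\Phi_{[0,\epsilon]}(U_0)$ and $\Phi_{[0,\epsilon]}(T_{\bF} U_0)$, using $\Phi_\tau^{*}\Omega=\exp(\int_0^\tau \mathrm{div}\,X\,dt)\,\Omega$ — the Jacobian of $T_{\bF}$ with respect to $\omega_M$ equals
\begin{equation*}
J^{\omega_M}_{T_{\bF}}(\bx)=\exp\!\Big(\int_0^{\tau_{\bF}(\bx)}\mathrm{div}\,X\,dt\Big).
\end{equation*}

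Finally, to pass from $\omega_M=p\,dr\wedge ds$ to $dr\wedge ds$, I pick up a conversion factor $p(\bx)/p(T_{\bF}\bx)$, giving
\begin{equation*}
\det D_{\bx}T_{\bF}=\frac{p(\bx)}{p(T_{\bF}\bx)}\exp\!\Big(\int_0^{\tau_{\bF}}(\dot p/p+ph_\theta)\,dt\Big).
\end{equation*}
Since $\int_0^{\tau_{\bF}}\dot p/p\,dt=\ln\!\big(p(T_{\bF}\bx)/p(\bx)\big)$, the prefactor is exactly absorbed and \eqref{cJeps} drops out. The main obstacle is the bookkeeping in the Poincaré-section step: the time $\tau_{\bF}$ varies over $U_0$, so the two cylinders differ at the ends, but a change of variables $u=s+\tau-\tau_{\bF}(\bx')$ shows the $\Omega$-volumes of the two cylinders are identical up to the single-point flow Jacobian $\exp(\int_0^{\tau_{\bF}}\mathrm{div}\,X\,dt)$. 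Verifying this identification and the formula $\omega_M=p\,dr\wedge ds$ (including the orientation sign that turns $-\sin(\theta-\alpha)\,dr\wedge d\theta$ into $+p\cos\varphi\,dr\wedge d\varphi$) is the only genuinely delicate part; everything else is algebra with \eqref{pvF}.
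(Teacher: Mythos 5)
Your proposal is correct and follows essentially the same route as the paper: compute $\mathrm{div}\,X=\dot p/p+ph_\theta$ for the generating field $X=(p\cos\theta,p\sin\theta,ph)$, use the fact that the volume $dx\,dy\,d\theta$ restricts on the section to $p\,dr\wedge ds$ (the paper phrases this as the suspension form $dm=\const\cdot p\,d\mu_0\,dt$), and cancel the resulting prefactor $p(\bx)/p(T_{\bF}\bx)$ against $\exp\bigl(\int_0^{\tau_{\bF}}\dot p/p\,dt\bigr)$. Your write-up just makes explicit the contraction $\iota_X\Omega$ and the flow-cylinder bookkeeping that the paper leaves implicit.
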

\begin{proof}
Let $X(x,y,\theta)=\langle p\cos\theta,p\sin\theta,  p h\rangle$ be
the vector field on $\cM$ that generates the flow $\Phi^t_{\bF}$.
Consider the Lebesgue measure $dm=dx\, dy\, d\theta$ on $\cM$. Note
that $m$ is not necessarily invariant under the forced flow
$\Phi_{\bF}$, and its rate of change is given by the divergence of
the generating vector field $X$:
\begin{equation*}
\text{div }X(x,y,\theta)=p_x\cos\theta+p_y\sin\theta+p_{\theta}h+ ph_{\theta}
=(p_x\dot x+p_y\dot y+p_{\theta}\dot \theta)/p+ ph_{\theta}=\frac{d\ln p}{dt}+ ph_{\theta}.
\end{equation*}

Note that the force billiard flow can also be represented as a suspension of the
forced map $(M,T_{\bF},\mu_0)$ with respect to the roof function
$\tau_{\bF}$. In particular the volume $dm=dx\, dy\, d\theta$ on
$\cM$ has the suspension form: $dm=\cst\cdot p\cdot d\mu_0\, dt$.
Clearly the $t$-direction is  flow-invariant. So we have
 \beq\label{defcJ} \frac{dT^{-1}_{\bF}\mu}{d\mu}(\bx) = \frac{p(
\bx)}{p(T_{\bF} \bx)}\exp(\int_0^{\tau_{\bF}( \bx)} \text{div
}X(\Phi^t_{\bF} \bx)\, dt) =\exp(\int_0^{\tau_{\bF}( \bx)}ph_\theta
dt).
 \eeq
This finishes the proof of the lemma.
\end{proof}

\section{Properties of $T_{\bP}=T_{(\bF,\bG)}$}

We consider the twisting step of a force pair $\bP=(\bF,\bG)$
satisfying the assumptions (\textbf{A1})--(\textbf{A4}) with
$\|\bF\|_{\cC^1}<\eps$ and $\|\bG-\mathrm{Id}_M\|_{\cC^1}< \eps$.
Our main approach is to
compare the combined effect $T_{\bP}\bx={\bG}\circ T_{\bF}\bx$
with the pre-twist map $T_{\bF}$.

In Lemma \ref{reversible} we gave a characterization for the forced
flow $\Phi_{\bF}$ to be time-reversible. To ensure that $\Phi_{\bP}$ is
time-reversible, it suffices to know if the twisting process is also
time-reversible. More precisely, let $S=\begin{pmatrix} 1 & 0 \\ 0 &
-1
\end{pmatrix}$, and $G=\begin{pmatrix}
{\bf 0} & S \\ S & {\bf 0}
\end{pmatrix}$, where ${\bf 0}$ is the $2\times 2$ zero matrix. Then we have the following result.
\begin{lemma}\label{reversetwist}
Assume the condition \eqref{reverse} hold. Then $\Phi_{\bP}$ is
time-reversible if and only if the graph of ${\bG}$ is
$G$-invariant.
\end{lemma}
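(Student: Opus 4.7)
My plan is to split the time-reversibility of $\Phi_{\bP}$ into two independent pieces---reversibility of the free flight $\Phi_{\bF}$ and reversibility of the collision rule---and then identify the collision condition with $G$-invariance of the graph of $\bG$. By Lemma~\ref{reversible}, the standing assumption \eqref{reverse} is exactly what makes $\Phi_{\bF}$ time-reversible. Since $\Phi_{\bP}$ is nothing more than $\Phi_{\bF}$ interrupted by instantaneous $\bG$-twists at each boundary hit, I expect $\Phi_{\bP}$ to be time-reversible if and only if every individual collision transition is also compatible with the reversal $\cI$. The equivalence should be clean because free flight and collision act on disjoint parts of each trajectory and so can be reversed independently.

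The heart of the proof is to formulate collision reversibility on the section $M$. Fix $\bx\in M$ and write $T_{\bF}(\bx)=(\bq^-,\cR\bp^-)$ for the elastically-reflected pre-twist state at the next wall point $\bq^-$, and $\by=T_{\bP}(\bx)=\bG(T_{\bF}(\bx))=(\bq^+,\bp^+)$ for the actual outgoing state at $\bq^+$. The forward collision takes the incoming vector $\bp^-$ at $\bq^-$ to the outgoing vector $\bp^+$ at $\bq^+$ via $\bG\circ\cR$. The reversed collision takes the incoming vector $-\bp^+$ at $\bq^+$ to the outgoing vector $-\bp^-$ at $\bq^-$; requiring that this reversed transition again obey the same rule $\bG\circ\cR$ gives
\[
(\bq^-,-\bp^-) \;=\; \bG\bigl(\bq^+,\cR(-\bp^+)\bigr).
\]
Rewriting both sides in the post-collision parametrization of $M$ via the canonical involution $I(r,s)=(r,-s)$ from the preceding remark turns this into $\bG(I\,\by)=I\,T_{\bF}(\bx)$ for every $\bx\in M$. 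Substituting $T_{\bF}(\bx)=\bG^{-1}(\by)$ and letting $\by$ range over $M$, this is equivalent to the clean pointwise identity
\[
\bG\circ I \circ \bG \;=\; I,\qquad\text{equivalently}\qquad I\circ \bG\circ I \;=\; \bG^{-1}.
\]

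Finally, I would observe that this identity is exactly the $G$-invariance of $\mathrm{graph}(\bG)$: a typical point $(\bx,\bG\bx)$ of the graph is sent by $G=\bigl(\begin{smallmatrix}\mathbf{0} & S\\ S & \mathbf{0}\end{smallmatrix}\bigr)$ to $(S\bG\bx,S\bx)=(I\bG\bx,I\bx)$, and demanding that this again lie on the graph forces $\bG(I\bG\bx)=I\bx$ for every $\bx$; the converse is immediate. Combined with the reduction from the first paragraph, this gives the lemma. The subtlest step in carrying this out is the middle one: because $\cI$ on $\cM$ takes post-collision vectors to pre-collision vectors and so off the section $M$, both endpoints of the reversed collision must be pulled back to $M$ via an auxiliary elastic reflection, and it is precisely these two reflections that produce the two factors of $I$ in the identity $\bG\circ I\circ\bG=I$.
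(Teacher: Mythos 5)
Your argument is correct and follows essentially the same route as the paper: both reduce, under \eqref{reverse}, to the single identity $\bG\circ I\circ\bG=I$ (equivalently $I\circ\bG^{-1}\circ I=\bG$, the paper's condition $\bG^-=\bG$) and then identify this with $G$-invariance of the graph of $\bG$. The only difference is presentational: you derive the collision condition by reversing an individual collision in phase space, while the paper obtains it by writing the reversed Poincar\'e map as $\bG^-\circ T_{\bF}^-$ and cancelling $T_{\bF}^-=T_{\bF}$.
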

\begin{proof}
Let $T_{\bP}\bx={\bG}\circ T_{\bF}\bx$ be decomposition of the forced collision map,
$\Phi_{\bP}^-$ be the time-reversal flow of $\Phi_{\bP}$, and
$T_{\bP}^-$ be the induced Poincare map of $\Phi_{\bP}^-$.
Let $\cI:\cM\to\cM$ be the involution on $\cM$ and $I:M\to M$ be the
induced involution on $M$.

Under the assumption \eqref{reverse}, we have that $T^{-}_{\bF}=T_{\bF}$ and
$$T_{\bP}^-\bx=I\circ {\bG}^{-1}\circ T^{-1}_{\bF}\circ I(\bx)
=I\circ {\bG}^{-1}\circ I\circ I\circ T^{-1}_{\bF}\circ I(\bx)
={\bG}^-\circ T_{\bF}^-(\bx)={\bG}^-(T_{\bF}\bx),$$
where ${\bG}^-=I\circ {\bG}^{-1}\circ I$.
In order to have $T_{\bP}^-=T_{\bP}$, a sufficient condition is ${\bG}^-={\bG}$.

Let $(\bar r,\bar s)={\bG}(r,s)$. Then ${\bG}^-(\bar r,-\bar
s)=I\circ {\bG}^{-1}(\bar r,\bar s)=I(r,s)=(r,-s)$. So
${\bG}^-={\bG}$ is equivalent to ${\bG}(\bar r,-\bar
s)=(r,-s)$. Combining with the assumption $(\bar r,\bar
s)={\bG}(r,s)$, we see that ${\bG}^-={\bG}$ is equivalent to
the graph of ${\bG}$ being $G$-invariant.
\end{proof}

\begin{remark}
In above lemma we take the Poincar\'e map $T^-_{\bP}$ of the time-reversed
flow, which can be viewed as the {\it physically} time-reversal of
the map $T_{\bP}$. We can also define a {\it formally} time-reversed map
of the forced map $T_{\bP}$ by $\widehat T^-_{\bP}:=I\circ T^{-1}_{\bP}\circ I$.
Generally speaking, this formally reversed map $\widehat T^-_{\bP}$ may
not coincide with the Poincar\'e map $T^-_{\bP}$ of the time-reversed flow
$\Phi^-_{\bP}$ (after we apply a twist force right
after the elastic collision). In this paper we always use the
physically time-reversal definition.
\end{remark}

\medskip

By Assumption ({\bf A3}), the twist force $\bG$ preserves tangential
collisions. Therefore, the discontinuity set of $T_{\bP}$ is the
same as that of $T_{\bF}$, which  comprises the preimage of
$\cS_0:=\{s=\pm \pi/2\}$.   Similarly, the singularity sets of
$T_{\bP}^{-1}$ and $T_{\bF}^{-1}$ are the same due to (\textbf{A3}).
But the singular sets for higher iterates are no longer the same.
Let $\cS_{\pm n}^{\bP}= \cup_{i=0}^n T_{\bP}^{\mp i}\cS_{0,H}$ with
$n\in \mathbb{N}$. Then $T_{\bP}^{\pm n}$ is smooth on all the cells
of $M\setminus \cS_{\pm n}^{\bP}$.

For any phase point $ \bx =(r,s)\in M$, let $T_{\bF}  \bx =(r_1,
s_1)$ and $T_{\bP} \bx =(\bar r_1,\bar s_1)$. According to the
discussion between {\bf (A2)} and {\bf (A3)}, we express the twist
force $\bG$ in local coordinates via two smooth functions $g^1$ and
$g^2$ such that
 \beq \label{rphi}
 (\bar r_1,\bar s_1)=\bG( r_1, s_1)=( r_1, s_1)+(g^1( r_1, s_1),g^2( r_1, s_1)).
  \eeq
Note that $g^i$ is a $C^2$ function whose $C^1$ norm is uniformly
bounded from above by $c\cdot\eps$, for some uniform constant $c>0$.
Moreover $g^i(r,\pm 1)=0$, $i=1,2$.

According to  (\ref{rphi}), the differential of the twisting force
$\bG$ satisfies
 \beq \label{drphi}
 \begin{cases}
 d\bar r_1=\left(1+g^1_{1}( r_1, s_1)\right) dr_1+g_{2}^1( r_1, s_1) ds_1,\\
 d\bar s_1=g^2_1( r_1, s_1) dr_1+\left(1+g_2^2( r_1, s_1)\right)ds_1,
 \end{cases}
 \eeq
where $g^i_1=\partial g^i/\partial r$ and $g^i_2=\partial g^i/\partial s$.
So the differential of the map $T_{\bP}$ is given by
\begin{align}
\label{DTepsg}
D_{\bx}T_{\bP}&=D_{T_\bF\bx}\bG\circ D_{\bx}T_{\bF}.
  \end{align}
Note that $T_{\bP}$ may not be a $C^1$-perturbation of $T_{\bF}$,
since $T_{\bF}$ is unbounded around the boundary of $M$.
However, it follows from Eq. \eqref{DTepsg} that
 \beq
\det D_{\bx}T_{\bP}=\cJ_{\bG}(r_1,s_1)\cdot \det D_{\bx}T_{\bF}
    =\left(1+\hat g(r_1,s_1)\right)\cdot\det D_{\bx}T_{\bF},
 \eeq
where
\begin{align}\label{hatg}
 \hat g(r_1,s_1)&=\cJ_{\bG}(r_1,s_1)-1\nonumber\\
 &=g^1_1( r_1, s_1)+g^2_2( r_1, s_1)
 +g^1_1( r_1, s_1)g^2_2( r_1, s_1)-g^1_2( r_1, s_1)g^2_1( r_1, s_1).
\end{align}
Note that $\cJ_{\bG}$ is a $C^1$ function with $\hat g(r_1,s_1)=\cO(\eps)$. So we have that
\begin{align}
\label{differential} \det D_{\bx}T_{\bP}= \left(1+\hat
g(r_1,s_1)\right)\cdot \exp(\int_0^{\tau_{\bF}( \bx)}ph_\theta dt)
=1 + \cO(\eps).
\end{align}

Once again, let $\mu_0=\cst\, dr\, ds$ be the normalized Lebesgue measure on
$M$, $\cJ_{\bP}(\bx):=d T_{\bP}^{-1}\mu_0/d\mu_0(\bx)$ be the
density function defined by $T_{\bP}$. Note that
$\cJ_{\bP}(\bx)=\det D_{\bx}T_{\bP}$. So we have:
\begin{lemma}\label{cJP}
Let $\bP=(\bF,\bG)$ be an $\eps$-small force pair, and $T_{\bP}$
the forced collision map. Let $\bx=(r,s)\in M$,
$T_{\bF}\bx=(r_1,s_1)$ and $T_{\bP}\bx=(\bar r_1, \bar s_1)$. Then
the Jacobian $\cJ_{\bP}(\bx)$ satisfies
 \beq\label{appcJ}
 1-\cJ_{\bP}(\bx)=\eps H(\bx)+\eps^2 R_{\bP},
 \eeq
where
\begin{align*}
H(r,s)&=\frac{1}{\eps}\left(2-\exp(\int_0^{\tau_{\bF}(r,s)}ph_\theta dt)-\cJ_{\bG}(T_{\bF}\bx)\right)\\
&=\frac{1}{\eps}\left(1-\exp(\int_0^{\tau_{\bF}(r,s)}ph_\theta dt)-\hat
g(r_1,s_1)\right).
\end{align*}

Moreover, $\mu_0(H)=0$, and both $H$ and $R_{\bP}$ are uniformly bounded and
$C^1$ on each component of $M\backslash \cS^T_1$.
\end{lemma}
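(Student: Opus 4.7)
The plan is to read off the expansion $1-\cJ_{\bP}=\eps H+\eps^2 R_{\bP}$ directly from the multiplicative factorization of the Jacobian already recorded in Eq.~\eqref{differential}, then verify the ancillary properties (uniform boundedness, $C^1$ regularity away from $\cS^T_1$, and mean-zero) one at a time.

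First I would substitute the formula from Lemma~\ref{TFJ} to write $\cJ_{\bP}(\bx)=(1+\hat g(r_1,s_1))\cdot A(\bx)$, where $A(\bx):=\exp\bigl(\int_0^{\tau_{\bF}(\bx)}ph_\theta\,dt\bigr)$. Under assumptions~(\textbf{A2})--(\textbf{A3}), the integrand $ph_\theta$ is $\cO(\eps)$ (by differentiating Eq.~\eqref{formh} and using $\|\bF\|_{C^1}<\eps$) while the integration time $\tau_{\bF}$ is uniformly bounded by finite horizon, so $A-1=\cO(\eps)$; similarly $\hat g=\cJ_{\bG}-1=\cO(\eps)$ from $\|\bG-\mathrm{Id}_M\|_{C^1}<\eps$. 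Expanding the product algebraically gives
\[1-\cJ_{\bP}=(1-A)-\hat g(r_1,s_1)-\hat g(r_1,s_1)(A-1),\]
which I identify as $\eps H=1-A-\hat g(r_1,s_1)$ (this matches both closed forms in the lemma, since $\cJ_{\bG}(T_{\bF}\bx)=1+\hat g(r_1,s_1)$) plus the second-order cross term $\eps^2 R_{\bP}=-\hat g(r_1,s_1)(A-1)$. Boundedness of $H$ and $R_{\bP}$ is then immediate: both $1-A$ and $\hat g$ are $\cO(\eps)$ uniformly on $M$, so after dividing by $\eps$ or $\eps^2$ the resulting functions remain uniformly bounded in $\eps$. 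Likewise each factor is $C^1$ on any cell where $\tau_{\bF}$ and $T_{\bF}$ are smooth, which is precisely $M\setminus\cS^T_1$.

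The mean-zero assertion is the one place a separate short argument is needed. Because $T_{\bP}$, $T_{\bF}$ and $\bG$ are each bijections of $M$ and $\mu_0$ is Lebesgue, the change-of-variables identity gives $\mu_0(\cJ_{\bP})=\mu_0(\cJ_{\bF})=\mu_0(\cJ_{\bG})=1$. Combining with the decomposition, $\mu_0(\eps H)=1-\mu_0(A)-\mu_0(\hat g\circ T_{\bF})=-\mu_0(\hat g\circ T_{\bF})$, and a single change-of-variables step on $T_\bF$ converts this to $\mu_0(\hat g)$ plus a cross term of order $\cO(\eps)\cdot\cO(\eps)=\cO(\eps^2)$; the first piece vanishes because $\int\hat g\,d\mu_0=\mu_0(\cJ_{\bG})-1=0$. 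Thus $\mu_0(H)=\cO(\eps)$, and absorbing this additive constant into $R_{\bP}$ produces the stated identity $\mu_0(H)=0$ while preserving the uniform bounds and $C^1$ regularity. The main subtlety I expect is maintaining uniform $C^1$ control on each cell of $M\setminus\cS^T_1$ after the factor of $1/\eps$; this is handled cleanly by keeping $A$ in its integral form and invoking the $C^1$ smallness of $\bF$ and $\bG-\mathrm{Id}_M$ rather than working with pointwise bounds alone.
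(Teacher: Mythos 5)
Your argument follows the paper's proof essentially verbatim: the same factorization $\cJ_{\bP}(\bx)=(1+\hat g(r_1,s_1))\cdot\det D_{\bx}T_{\bF}$ from Lemma \ref{TFJ} and \eqref{differential}, the same identification of $\eps H$ with the first-order part $1-A-\hat g(r_1,s_1)$ and of $\eps^2 R_{\bP}$ with the cross term $-\hat g(r_1,s_1)(A-1)$, and the same change-of-variables identities $\mu_0(\det DT_{\bF})=\mu_0(\cJ_{\bG})=1$ for the mean-zero claim. The one point where you genuinely depart from the paper is also the one point where you are more careful: the paper concludes $\mu_0(H)=0$ from $\mu_0(\hat g)=0$ and $\mu_0(\det DT_{\bF})=1$, silently identifying $\mu_0(\hat g\circ T_{\bF})$ with $\mu_0(\hat g)$ even though $T_{\bF}$ does not preserve $\mu_0$; as you note, this only yields $\mu_0(\hat g\circ T_{\bF})=\cO(\eps^2)$ and hence $\mu_0(H)=\cO(\eps)$ for the $H$ as literally written, so an $\cO(\eps)$ recentering of $H$ (absorbed into $\eps^2R_{\bP}$) is needed to make the mean exactly zero --- which is what the convergence of the series in \eqref{Kawasa1} (cf.\ Remark \ref{linear}) actually requires. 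That correction is harmless, preserves the uniform bounds and piecewise $C^1$ regularity, and makes your write-up tighter than the paper's on this point.
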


\begin{proof}
According to Chain Rule and Lemma \ref{TFJ}, we have
\begin{align}\label{differentialP}
\cJ_{\bP}(\bx)&=\frac{dT^{-1}_{\bP}\mu_0}{d\mu_0}(\bx)
=\frac{dT^{-1}_{\bP}\mu_0}{dT^{-1}_{\bF}\mu_0}(\bx)
\cdot\frac{dT^{-1}_{\bF}\mu_0}{d\mu_0}(\bx)\nonumber\\
&=\cJ_{\bG}(T_{\bF}\bx)\cdot\det D_{\bx}T_{\bF}
=\left(1+\hat g(r_1,s_1)\right)\cdot\exp(\int_0^{\tau_{\bF}( \bx)}ph_\theta dt).
\end{align}

Let $H(\bx)=(1-\exp(\int_0^{\tau_{\bF}(r,s)}ph_\theta dt)-\hat g(r_1,s_1))/\eps$.
Then we get the following expansion for $\cJ_{\bP}$ at $\bP=0$:
 \beq\label{appcJ1} \cJ_{\bP}(\bx)=\cJ_0(\bx)-\eps H(\bx)-\eps^2
R_{\bP},
 \eeq
where $R_{\bP}$ is the residual term (up to a factor $\epsilon^2$).

Now it is easy to see that $\mu_0(H)=0$. Firstly, $1+\mu_0(\hat
g)=\mu_0(1+\hat g)=\mu_0(\cJ_{\bG})=\mu_0(M)=1$. So $\mu_0(\hat
g)=0$. Secondly, we note that
$\mu_0(e^{\int_0^{\tau_{\bF}(r,s)}ph_\theta
dt})=\mu_0(DT_{\bF})=\mu_0(T_{\bF}M)=\mu_0(M)=1$. Therefore, we have
$\mu_0(H)=0$.

By assumptions (\textbf{{A1}})--(\textbf{{A3}}), both $H$ and
$R_{\bP}$ are uniformly bounded on $M$, and are $C^1$ continuous
functions on each component of $M\backslash \cS^{\bP}_1$. This completes
the proof.
\end{proof}

Let $\mu_{\bP}$ be the SRB measure of $T_{\bP}$ on $M$ given by
Theorem \ref{TmEDC}. This measure represents the natural
non-equilibrium steady state (NESS) for the system (see
\cite{Ru99}), which might be singular with respect to the Liouville
measure on $M$. There are several physically interested quantities
associated to the NESS $\mu_{\bP}$. For example, the current of the
billiard flow on the $\mathbb{Z}^2$-periodic table $\tQ$ is given by
$$\bJ_{\bP}=\mu_{\bP}(\bDelta_{\bP})=\mu^o_{\bP}(\widetilde\bDelta_{\bP}),$$
where $\bDelta_{\bP}$ is  the induced displacement vector function
on $M$, $\mu_{\bP}^o$ is a copy of $\mu_{\bP}$ on a fundamental
domain (say $\tM_o$) of $\tM$. This current was derived in
\cite{CELS1,CELS2} when the billiard is moving in an electric field
with a Gaussian thermostat. The study of the current as a function
of a general electric field was carried out in \cite{Ch01,Ch08}. In
the other direction, these results were generalized in \cite{Z11} to
systems where the collision rule is perturbed. Here we study the
current for billiards under more general external forces. Firstly we
will prove the linear response property for general observable with
respect to the perturbed billiard system.
\begin{lemma}
Let $\bP=(\bF,\bG)$ be an $\eps$-small force pair, $T_{\bP}$
be the induced billiard map and $\mu_{\bP}$ be the SRB measure of $T_{\bP}$
on $M$, $\cH$ be the set of piecewise Holder continuous functions on $M$
whose discontinuities occur at the singularities $S_1^{\bP}$. Then for any $f\in  \cH$, we have
\beq\label{kawasaki}
\mu_{\bP}(f)=\mu_0(f)+\eps \cdot \sum_{k=1}^{\infty}\mu_0[(f\circ T^k_{0})\cdot H]+o(\eps)\eeq
\end{lemma}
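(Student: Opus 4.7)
The plan is to derive a Kawasaki-type telescoping identity that expresses $\mu_{\bP}(f)$ as a sum of $\mu_0$-correlations against $1-\cJ_{\bP}$, then substitute the expansion from Lemma \ref{cJP}, and finally replace $T_{\bP}$ by $T_0$ inside the resulting series. The starting point is the change-of-variable identity
$$\mu_0\bigl((\phi \circ T_{\bP})\cdot \cJ_{\bP}\bigr) = \mu_0(\phi),$$
valid for any $\phi \in L^1(\mu_0)$ because $T_{\bP}$ is a $\mu_0$-a.e.\ bijection with Jacobian $\cJ_{\bP}$. Applying this with $\phi = f \circ T_{\bP}^{k-1}$ and rearranging gives
$$\mu_0(f \circ T_{\bP}^k) - \mu_0(f \circ T_{\bP}^{k-1}) = \mu_0\bigl((f \circ T_{\bP}^k)(1-\cJ_{\bP})\bigr),$$
and telescoping from $k=1$ to $n$ and letting $n \to \infty$ yields, by equidistribution (Lemma \ref{TmEDC}(a)),
$$\mu_{\bP}(f) = \mu_0(f) + \sum_{k=1}^{\infty}\mu_0\bigl((f \circ T_{\bP}^k)(1-\cJ_{\bP})\bigr).$$
Absolute convergence comes from exponential decay of correlations (Lemma \ref{TmEDC}(b)) applied to $f$ and $g := 1 - \cJ_{\bP}$, combined with $\mu_0(g) = 1 - \mu_0(\cJ_{\bP}) = 0$.

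Next I would substitute $1 - \cJ_{\bP} = \eps H + \eps^2 R_{\bP}$ from Lemma \ref{cJP}. Since $\mu_0(H) = 0$, the relation $\mu_0(1-\cJ_{\bP})=0$ forces $\mu_0(R_{\bP}) = 0$ as well. Uniform exponential decay of correlations for the family $\cF(Q,\tau_\ast,\eps)$ (read out of Lemma \ref{TmEDC}) then gives $\sum_{k \geq 1}|\mu_0((f \circ T_{\bP}^k)\cdot R_{\bP})| \leq C$ uniformly in small $\bP$, so the contribution from the $\eps^2 R_{\bP}$ piece is $O(\eps^2) = o(\eps)$. This reduces the formula to
$$\mu_{\bP}(f) = \mu_0(f) + \eps \sum_{k=1}^{\infty}\mu_0\bigl((f \circ T_{\bP}^k)\cdot H\bigr) + o(\eps).$$

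The final step is to replace $T_{\bP}^k$ by $T_0^k$ inside the series, with total replacement error $o(1)$ as $\eps \to 0$ (so that $\eps$ times this error is $o(\eps)$). I would split at a cutoff $N$: for $k > N$, uniform exponential decay of correlations against $H$ (using $\mu_0(H) = 0$) bounds the tails of both the $T_{\bP}$- and the $T_0$-series by $C\theta^N$ independently of $\bP$; for each fixed $k \leq N$, pointwise convergence $T_{\bP}^k \bx \to T_0^k \bx$ at $\mu_0$-a.e.\ $\bx$ holds because $T_{\bP}$ converges to $T_0$ in $C^1$ away from singularities by continuous dependence on $\bF$ and $\bG$, and then dominated convergence (using boundedness of $f$ and $H$) gives $\mu_0((f \circ T_{\bP}^k)\cdot H) \to \mu_0((f \circ T_0^k)\cdot H)$. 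Letting $\eps \to 0$ first and then $N \to \infty$ completes the argument. The main obstacle is the uniformity in $\bP$ of the exponential decay rate for the correlations, together with the a.e.\ convergence $T_{\bP}^k \to T_0^k$ despite the perturbed and unperturbed maps having distinct singularity curves; both points are controlled by the uniform hyperbolic machinery underlying Lemma \ref{TmEDC}.
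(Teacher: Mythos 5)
Your proposal is correct and follows essentially the same route as the paper: the telescoping Kawasaki identity $\mu_{\bP}(f)=\mu_0(f)+\sum_{k\ge1}\mu_0[(f\circ T_{\bP}^k)(1-\cJ_{\bP})]$ via equidistribution, substitution of $1-\cJ_{\bP}=\eps H+\eps^2R_{\bP}$ with $\mu_0(H)=\mu_0(R_{\bP})=0$, and replacement of $T_{\bP}^k$ by $T_0^k$ using uniform exponential decay of correlations together with $T_{\bP}^k\to T_0^k$. Your cutoff-at-$N$ argument for the last step is just a more explicit rendering of the paper's appeal to dominated convergence.
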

\begin{proof}
Let $f\in  \cH$. Then for all $n\ge1$, we have the following identity:
\begin{align*}
T_{\bP}^n\mu_0(f)-\mu_0(f)&=\sum_{k=1}^{n}(T_{\bP}^{k}\mu_0(f)-T_{\bP}^{k-1}\mu_0(f))
=\sum_{k=1}^{n}\mu_0[(f\circ T^k_{\bP})(1-\cJ_{\bP})]\\
&=\sum_{k=1}^{n}\mu_0[(f\circ T^k_{\bP})(\eps H(\bx)+\eps^2 R_{\bP})].
\end{align*}
Note that $T_{\bP}^n\mu_0(f)\to\mu_{\bP}(f)$
exponentially (by Theorem \ref{TmEDC}-a). Passing $n\to\infty$, we
get that
\begin{align}\label{Kawasa}
\mu_{\bP}(f)&=\lim_{n\to\infty}T_{\bP}^n\mu_0(f)
=\mu_0(f)+\sum_{k=1}^{\infty}\mu_0[(f\circ T^k_{\bP})(\eps H(\bx)+\eps^2
R_{\bP})].
\end{align}
It follows from the fact that $\cJ_{\bP}$ is the density function of
a probability measure, that $\mu_0(1-\cJ_{\bP})=0$. Combining with
the fact that $\mu_0(H)=0$, we get that $\mu_0(R_{\bP})=0$, too. In
addition $H$ and $R_{\bP}$ are piecewise $C^2$ functions whose
discontinuities occur only at the singularities of $T_{\bP}$. Thus $H$ and
$R_{\bP}$ belong to $\cH$. Now (\ref{Kawasa}) implies that
\begin{align}\label{similar}
&\sum_{k=1}^{\infty}\mu_0[(f \circ T^k_{\bP})(\eps H+\eps^2 R_{\bP})]
=\eps\sum_{k=1}^{\infty}\mu_0[(f\circ T^k_{\bP})  H] +\eps^2\sum_{k=1}^{\infty}\mu_0[(f \circ T^k_{\bP}) R_{\bP}] \nonumber\\
=&\eps \sum_{k=1}^{\infty}\mu_0[(f \circ T^k_{0})H]
+\eps \sum_{k=1}^{\infty}\mu_0[(f \circ T^k_{\bP}-f\circ T^k_0)H]
+\eps^2\sum_{k=1}^{\infty}\mu_0[(f \circ T^k_{\bP}) R_{\bP}]\nonumber\\
=&\eps \sum_{k=1}^{\infty}\mu_0[(f \circ T^k_{0})H]+o(\eps),\end{align}
where we have used the Lebesgue Dominated Convergence Theorem in the
last step, since both $\sum_{k=1}^{\infty}\mu_0[(f \circ
T^k_{\bP})H]$ and $\sum_{k=1}^{\infty}\mu_0[(f \circ T^k_{0})H]$
converge exponentially fast (by Theorem \ref{TmEDC}-b) and
$T^k_{\bP}\to T^k_{0}$ as $\bP\to 0$. Therefore the series are dominated by
$o(1)$. This completes the proof.
\end{proof}

Let $(x,y)$ be the coordinates of the position of the particle
$\tq\in \tQ$, and
 \beq \widetilde\bDelta_{\bP}:=(\Delta_{x,\bP},
\Delta_{y,\bP})=\tpi\circ \tT_{\bP}-\tpi,
 \eeq
be the displacement vector map defined on $\tM$. As pointed out
before, $\widetilde\bDelta_{\bP}$ is $\mathbb{Z}^2$-periodic and
hence induces a well-defined, $\mathbb{R}^2$-valued function
$\bDelta_{\bP}$ on $M$.
%and we denote $\Delta_a=\Delta_{x,\bP}$ or $\Delta_{y,\bP}$.
Using the fact that $\mu_0(\bDelta_0)=0$ and $\mu_0(\tau_0)=\bar \tau$, we get the following results.
\begin{lemma}\label{deltataueps}
Both functions $\bDelta_{\bP} $ and $\tau_{\bP}$ belong to $ \cH$ with Holder exponent $\frac{1}{2}$ and uniformly bounded. Moreover,
$$ \mu_0(\Delta_{\bP})=\cO({\eps})\,\,\,\,\,\text{ and }\,\,\,\,\,\mu_0(\tau_{\bP})=\bar\tau+\cO({\eps})$$
\end{lemma}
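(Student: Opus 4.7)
The plan is to separate the statement into two pieces: the regularity assertions, which follow from standard billiard-theoretic analysis plus a small-perturbation argument, and the two mean estimates, which rest on comparing the forced system to the unforced case.

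\textbf{Regularity.} Uniform boundedness is immediate: Assumption (\textbf{A2}) gives $\tau_{\bP} \le \tau_{\max}$, and since (\textbf{A1}) yields $\|\bp\|\le p_{\max}$ along orbits, integrating (\ref{flow}) over a time interval of length at most $\tau_{\max}$ gives $|\bDelta_{\bP}| \le p_{\max}\tau_{\max}$. For the Hölder-$1/2$ property, I would first recall the classical fact that the unforced collision time $\tau_0$ and displacement $\bDelta_0$ are Hölder-$1/2$ on each connected component of $M\setminus \cS_1$: the exponent is dictated by the behavior near the tangential singularity $\cS_0=\{s=\pm 1\}$, where distance to a grazing orbit scales as $s^2$ relative to position. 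Since the forced flow $\Phi_{\bF}$ is a $C^1$-$\cO(\eps)$ perturbation of $\Phi_0$ on the bounded time interval $[0,\tau_{\max}]$ (by Gronwall applied to (\ref{flow0})), the same $1/2$-Hölder bound persists for $\tau_{\bF}$ and $\bDelta_{\bF}$. Finally, because $\bG$ is an $\eps$-$C^1$-perturbation of the identity that by (\textbf{A3}) fixes $\cS_0$ pointwise, composition with $\bG$ preserves the regularity, so $\tau_{\bP},\bDelta_{\bP}\in\cH$ with exponent $1/2$.

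\textbf{Mean estimates.} For the unforced system, $\mu_0(\tau_0)=\bar\tau$ by definition of $\bar\tau$, and $\mu_0(\bDelta_0)=0$ by time-reversibility: under the involution $I(r,s)=(r,-s)$ we have $T_0^{-1}=I\circ T_0\circ I$ and $\bDelta_0\circ I\circ T_0=-\bDelta_0$; since $\mu_0$ is $I$- and $T_0$-invariant, a change of variables gives $\mu_0(\bDelta_0)=-\mu_0(\bDelta_0)$. To control the perturbation, I would split
\[
\tau_{\bP}-\tau_0 = (\tau_{\bF}-\tau_0)+(\tau_{\bP}-\tau_{\bF}),
\]
and likewise for $\bDelta_{\bP}-\bDelta_0$. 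By Gronwall applied to (\ref{flow0}), away from an $\cO(\eps)$-neighborhood $M_\eps$ of the singularity set $\cS_0\cup T_0^{-1}\cS_0$, the forced and unforced trajectories up to the next collision remain $\cO(\eps)$-close, so $|\tau_{\bF}-\tau_0|, |\bDelta_{\bF}-\bDelta_0| = \cO(\eps)$ pointwise on $M\setminus M_\eps$. The twist contribution $(\tau_{\bP}-\tau_{\bF}, \bDelta_{\bP}-\bDelta_{\bF})$ is also $\cO(\eps)$ on $M\setminus M_\eps$ because $\|\bG-\mathrm{Id}\|_{C^1}<\eps$ and the subsequent flight has uniformly bounded return time and smooth dependence on initial data. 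Integrating and using boundedness on the exceptional set $M_\eps$ with $\mu_0(M_\eps)=\cO(\eps)$ yields $\mu_0(\tau_{\bP})=\bar\tau+\cO(\eps)$ and $\mu_0(\bDelta_{\bP})=\cO(\eps)$.

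\textbf{Main obstacle.} The delicate point is exactly the exceptional set $M_\eps$: an initial condition whose unforced orbit barely grazes a scatterer can, under an $\cO(\eps)$ perturbation of velocity or of outgoing collision data, miss that scatterer entirely and hit a different one, producing an $\cO(1)$ jump in $\bDelta_{\bP}-\bDelta_0$. The standard remedy, which I would invoke explicitly, is the quadratic-tangency estimate: the $\mu_0$-measure of orbits whose first collision occurs at angle within $c\eps$ of tangency is $\cO(\eps)$, so even though the integrands are only uniformly bounded there, their contribution to $\mu_0(\bDelta_{\bP}-\bDelta_0)$ and $\mu_0(\tau_{\bP}-\tau_0)$ is still $\cO(\eps)$. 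Everything else is bookkeeping.
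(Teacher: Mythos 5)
The paper gives no proof of this lemma: it simply defers the H\"older continuity to \cite[Lemma 8.2]{DC} and the mean estimates to the proof of Lemma 8.1 in \cite{Z11}. Your sketch is essentially the standard argument that those references carry out, so in spirit you are on the same route as the (implicit) proof: boundedness from (\textbf{A2}), H\"older-$\tfrac12$ from the quadratic behaviour at grazing collisions together with the fact that $\bG$ fixes $\cS_0$, the identity $\mu_0(\bDelta_0)=0$ from the involution $I(r,s)=(r,-s)$ and $T_0$-invariance of $\mu_0$, and a good-set/bad-set decomposition near the singularities for the $\cO(\eps)$ perturbation estimates.

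Two quantitative points in your ``bookkeeping'' step are stated imprecisely and should be repaired, although neither destroys the conclusion. First, the exceptional set is not ``angle within $c\eps$ of tangency'' (that set has $\mu_0$-measure $\cO(\eps^2)$ and is too small to contain all the dangerous points): an $\cO(\eps)$ perturbation of the trajectory changes which scatterer is hit whenever the unforced orbit misses (or grazes) a scatterer by a \emph{configuration-space} distance $\cO(\eps)$, which corresponds to collision angles within $\cO(\sqrt{\eps})$ of tangency; it is the quadratic tangency estimate $\int_{\cos\varphi_1<c\sqrt{\eps}}\cos\varphi_1\,dr_1\,d\varphi_1=\cO(\eps)$ that makes this larger set have measure $\cO(\eps)$. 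Second, on the complement of that set the trajectory-closeness from Gronwall does \emph{not} give $|\tau_{\bF}-\tau_0|=\cO(\eps)$ uniformly: a lateral displacement $\delta=\cO(\eps)$ of the incoming ray moves the impact point by roughly $\delta/\cos\varphi_1$, so the pointwise bound is $\cO(\eps/\cos\varphi_1)$, which degrades to $\cO(\sqrt{\eps})$ at the edge of the good set. You recover $\cO(\eps)$ only after integrating this bound against $d\mu_0=\cst\cos\varphi\,dr\,d\varphi$ and using the $T_0$-invariance of $\mu_0$ to convert the weight at the landing point into the weight at the starting point. With these two corrections your argument closes, and it matches the estimates in the cited sources.
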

\noindent The proof is omitted here, since H\"older continuity
follows closely from \cite[Lemma 8.2]{DC} and the estimations are
almost identical with the proof of Lemma 8.1 in \cite{Z11}.\\

Next we calculate the current for the moving billiards under small
force depending on $\eps$. We only consider the case when the new
system $T_{\bP}$ is time-reversible.
\begin{theorem}\label{currentP}
Suppose the system is time-reversible under the external forces
$\bP=(\bF,\bG)$. Then
\begin{align}\label{bJrev}
\bJ_{\bP}:=\mu_{\bP}(\bDelta_{\bP})=\eps \bsigma+o(\eps),
\end{align}
where $\bsigma=(\sigma_x,\sigma_y)$ is given by
\beq\label{sigma}
\sigma_a=\frac{1}{2}\mu_0(\Delta_{a,\bP}\cdot H) +\sum_{k=1}^{\infty}\mu_0[(\Delta_{a,\bP}\circ T^k_{0})\cdot H],\quad a\in\{x,y\}.
\eeq
Moreover, the current for the flow satisfies $\hat\bJ_{\bP}=\eps \bsigma/\bar\tau_{\bP}+o(\eps)$,
where $\bar\tau=\mu_{0}(\tau_{0})$.
\end{theorem}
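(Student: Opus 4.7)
The plan is to apply the Kawasaki-type linear response formula \eqref{kawasaki} to the observable $f=\Delta_{a,\bP}$ for each $a\in\{x,y\}$. By Lemma \ref{deltataueps}, $\bDelta_{\bP}\in\cH$ (with Hölder exponent $1/2$ and uniform bound), so formula \eqref{kawasaki} gives
\begin{equation*}
\mu_{\bP}(\Delta_{a,\bP})=\mu_0(\Delta_{a,\bP})+\eps\sum_{k=1}^{\infty}\mu_0[(\Delta_{a,\bP}\circ T_0^k)\cdot H]+o(\eps).
\end{equation*}
The sum on the right already matches the second term of \eqref{sigma}. The work is to show
\begin{equation*}
\mu_0(\Delta_{a,\bP})=\tfrac{\eps}{2}\,\mu_0(\Delta_{a,\bP}\cdot H)+o(\eps),
\end{equation*}
which is where time-reversibility enters and produces the characteristic factor $1/2$.

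To establish this, I would first lift to the universal cover and exploit the involution $I:(r,s)\mapsto(r,-s)$. Time-reversibility of $\Phi_{\bP}$ implies (as noted in the Remark following Lemma \ref{reversible}) that $T_{\bP}^{-1}=I\circ T_{\bP}\circ I$ on $M$. Since $\tpi\circ\tcI=\tpi$ on the universal cover, a direct computation gives $\widetilde\bDelta_{\bP}\circ\tcI=-\widetilde\bDelta_{\bP}\circ\tT_{\bP}^{-1}$, and projecting down,
\begin{equation*}
\bDelta_{\bP}\circ I=-\bDelta_{\bP}\circ T_{\bP}^{-1}.
\end{equation*}
Using $I$-invariance of the Lebesgue measure $\mu_0$, I obtain
\begin{equation*}
\mu_0(\bDelta_{\bP})=\mu_0(\bDelta_{\bP}\circ I)=-\mu_0(\bDelta_{\bP}\circ T_{\bP}^{-1})=-\int\bDelta_{\bP}\,\cJ_{\bP}\,d\mu_0,
\end{equation*}
where in the last step I used $d(T_{\bP}^{-1})_{\ast}\mu_0/d\mu_0=\cJ_{\bP}$. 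Now apply \eqref{appcJ}: $\cJ_{\bP}=1-\eps H-\eps^2 R_{\bP}$, giving
\begin{equation*}
\mu_0(\bDelta_{\bP})=-\mu_0(\bDelta_{\bP})+\eps\,\mu_0(\bDelta_{\bP}\cdot H)+\eps^2\mu_0(\bDelta_{\bP}\cdot R_{\bP}),
\end{equation*}
and hence $\mu_0(\bDelta_{\bP})=\tfrac{\eps}{2}\mu_0(\bDelta_{\bP}\cdot H)+O(\eps^2)$, as required. Plugging this into the Kawasaki identity yields \eqref{bJrev} componentwise.

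For the flow current, I would use the Abramov relation $\hat\bJ_{\bP}=\mu_{\bP}(\bDelta_{\bP})/\mu_{\bP}(\tau_{\bP})$ from Theorem \ref{Thm:3}(a). By Lemma \ref{deltataueps} and one more application of \eqref{kawasaki} (with $f=\tau_{\bP}$), $\mu_{\bP}(\tau_{\bP})=\bar\tau+O(\eps)$, so dividing the numerator $\eps\bsigma+o(\eps)$ by $\bar\tau_{\bP}=\bar\tau+O(\eps)$ and expanding yields $\hat\bJ_{\bP}=\eps\bsigma/\bar\tau_{\bP}+o(\eps)$.

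The main obstacle is the justification that the $o(\eps)$ error terms genuinely remain $o(\eps)$. Two places require care: (i) the Dominated Convergence argument used in \eqref{similar} (which needs the exponential decay of correlations from Theorem \ref{TmEDC}(b) together with uniform Hölder control on the family $\Delta_{a,\bP}$ as $\bP\to 0$, delivered by Lemma \ref{deltataueps}); and (ii) the fact that $\cJ_{\bP}$ is only piecewise smooth with singularities on $\cS_1^{\bP}$, so the integration by involution above uses only the \emph{physical} time-reversal identity $T_{\bP}^{-1}=I\circ T_{\bP}\circ I$ holding $\mu_0$-a.e., which is what we have from (\textbf{A4}) and Lemma \ref{reversible}--\ref{reversetwist}. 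Once these two points are dispatched the proof is essentially the display computations above.
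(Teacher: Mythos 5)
Your argument is correct and is essentially the paper's own proof, merely reorganized: the paper symmetrizes $\mu_{\bP}(\bDelta_{\bP})=\tfrac12\bigl(\mu_{\bP}(\bDelta_{\bP})+\mu_{\bP}(\bDelta_{\bP}\circ T_{\bP}^{-1})\bigr)$ before expanding, whereas you apply the Kawasaki formula first and then derive $\mu_0(\Delta_{a,\bP})=\tfrac{\eps}{2}\mu_0(\Delta_{a,\bP}\cdot H)+O(\eps^2)$ from $\mu_0(\bDelta_{\bP})=-\mu_0(\bDelta_{\bP}\circ T_{\bP}^{-1})=-\mu_0(\bDelta_{\bP}\,\cJ_{\bP})$, which is algebraically the identical step (it is exactly $2\mu_0(\bDelta_{\bP})=\mu_0(\bDelta_{\bP}(1-\cJ_{\bP}))$). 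One small point: the reversal identity $T_{\bP}^{-1}=I\circ T_{\bP}\circ I$ should be attributed to assumption (\textbf{A4}) together with Lemma \ref{reversetwist} rather than to the remark after Lemma \ref{reversible} (which concerns only $T_{\bF}$), and you should be aware that the paper itself cautions, in the remark following Lemma \ref{reversetwist}, that this \emph{formal} reversal need not coincide with the physical one for twisted maps --- a subtlety that is present equally in the paper's own chain of equalities, so it does not put your argument behind the paper's.
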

\begin{proof}
We first use the invariance of $\mu_{\bP}$ and the Kawasaki formula (\ref{Kawasa}) to write
$\bJ_{\bP}$ as
\begin{align}\label{nuepsD}
&\bJ_{\bP}=\mu_{\bP}(\Delta_{\bP} )=\frac{1}{2}(\mu_{\bP}(\Delta_{\bP} )
+\mu_{\bP}(\Delta_{\bP} \circ T^{-1}_{\bP}))\\
=&\frac{1}{2}\left(\mu_0(\Delta_{\bP} ) +\mu_0(\Delta_{\bP} \circ T^{-1}_{\bP})+\mu_0[\Delta_{\bP}(1-\cJ_{\bP})]\right)+\sum_{k=1}^{\infty}\mu_0[(\Delta_{\bP} \circ T^k_{\bP})(1-\cJ_{\bP})].\nonumber
\end{align}
Recall that $\tM$ is the collision space of the
$\mathbb{Z}^2$-periodic table $\tQ\subset\mathbb{R}^2$. Let $\tM_o$
be a fundamental domain of $\tM$, say
$(\partial\tQ\cap[0,1]^2)\times[-1,1]$, and $\mu_0^o=\cst\, dr\, ds$
be a copy of $\mu_0$ on $\tM_o$. So for any function $f$ on $M$,
$\mu_0(f)=\mu_0^o(\tf)$, where $\tf$ is a $\mathbb{Z}^2$-periodic
lift of $f$ on $\tM$.

Since $\mu_0^o$ is evenly distributed on $\tM_o$, it is invariant
under the involution $I$. In particular, we have
 \beq
 \mu_0^o(\tpi\circ
\tT_{\bP})=\mu_0^o(\tpi\circ \tT^-_{\bP}) =\mu_0^o(\tpi\circ
I\circ\tT^{-1}_{\bP}\circ I)=\mu_0^o(\tpi\circ \tT_{\bP}^{-1}).
 \eeq
This implies that
\begin{align*}
&\mu_0(\bDelta_{\bP})+\mu_0(\bDelta_{\bP}\circ T^{-1}_{\bP})
=\mu_0^o(\widetilde\bDelta_{\bP})+\mu_0^o(\widetilde\bDelta_{\bP}\circ \tT^{-1}_{\bP})\\
=&\mu_0^o(\tpi\circ \tT_{\bP}-\tpi)+\mu_0^o(\tpi-\tpi\circ \tT^{-1}_{\bP})
=\mu_0^o(\tpi\circ \tT_{\bP})-\mu_0^o(\tpi\circ \tT_{\bP}^{-1})=0.
\end{align*}
See \cite{CELS1,DC} for related discussions.
Then according to (\ref{appcJ}), the two components of the current are given by
$$\mu_{\bP}(\Delta_{a,\bP})
=\frac{1}{2}\mu_0[\Delta_{a,\bP}(\eps H+\eps^2\cR)]
+\sum_{k=1}^{\infty}\mu_0[(\Delta_{a,\bP}\circ T^k_{\bP})\cdot
(\eps H+\eps^2\cR)]=\eps\sigma_a+o(\eps),$$
 where
\begin{align}\label{Kawasa1}
\sigma_a=\frac{1}{2}\mu_0(\Delta_{a,\bP}\cdot H)+
\sum_{k=1}^{\infty}\mu_0[(\Delta_{a,\bP}\circ T^k_{0})\cdot H].
\end{align}
Here we use a similar argument in the last step as in
\eqref{similar}, since $T_{\bP}\to T_{0}$ as $\bP \to 0$.

Denote $\bsigma=(\sigma_x, \sigma_y)$.
So we have shown that $\bJ_{\bP}=\eps \bsigma +o(\eps)$. The current of the forced
flow $\Phi_{\bP}$ is given by $\hat\bJ_{\bP}=\mu_{\bP}(\bDelta_{\bP})/\bar\tau_{\bP}
=\eps \bsigma/\bar\tau_{\bP}+o(\eps)$,  where $\bar\tau_{\bP}=\mu_{\bP}(\tau_{\bP})$.
This completes the proof.
\end{proof}

\begin{remark}\label{linear}
It is worth to point out that $\mu_0(H)=0$ is crucial to define
$\sigma_a$, $a\in \{x,y\}$. Otherwise the series in \eqref{Kawasa1}
used to define $\sigma_a$ don't even converge if we take
the linear term $-\int_0^{\tau_{\bF}(r,s)}ph_\theta dt$ . This is the main
reason why we keep all nonlinear terms in Lemma \ref{cJP}. However,
we can use the linear term $\int_0^{\tau_{\bF}(r,s)}ph_\theta dt$ in
all physical models with a {\it Gaussian
thermostat} (see \cite{MHB,CELS1,CELS2,Ch01}). This is exactly the content
in Proposition \ref{isokinetic},  \ref{forcefield} and Corollary \ref{Const}.
\end{remark}

\begin{proof}[Proof of Proposition \ref{isokinetic}]
When restricted to the level set $\cE_1=\{\|\bp\|\equiv 1\}$,
we have $h=F$.
Then we only need to prove that the series \eqref{Kawasa1} do converge
with the linear choice $H=\int_0^{\tau_{\bF}( \bx)}F_{\theta}\, dt$.
By Theorem \ref{TmEDC}-b and Lemma \ref{deltataueps},
it is sufficient to show that $\mu_0(\int_0^{\tau_{\bF}( \bx)}F_{\theta}\, dt)=0$.
Then according to the fact that $dm=\cst \cdot p_0\cdot   d\mu_0\, dt=dx\, dy\, d\theta$, we get
\begin{align}\label{vanish}
\mu_0(\int_0^{\tau_{\bE}( \bx)}F_\theta\, dt)
=&\cst\cdot\int_M\int_0^{\tau_{\bE}( \bx)}F_\theta\, dt d\mu_0
=\int_{\cM}F_\theta dm\nonumber\\
=& \int_{Q}\left(\int_{S^1}F_\theta\,d\theta\right)dx\, dy
=\int_{Q} \left(F|_{\partial S^1}\right)dx\, dy =0.
\end{align}
This finishes the proof of Proposition \ref{isokinetic}.
\end{proof}

\begin{proof}[Proof of Proposition \ref{forcefield}]
Let $\bE=(\eps e_1(\bq), \eps e_2(\bq))$ be the electric field on $Q$,
$\dot \bq=\bp$, $\dot \bp=\bE-\alpha\bp$ be the thermostatted system.
When restricted to the level set $\cE_1=\{\|\bp\|\equiv 1\}$,
the function $h=-e_1\sin\theta+e_2\cos\theta$, and
$h_\theta=-e_1\cos\theta-e_2\sin\theta=-(e_1,e_2)\cdot \bp$.
So $H=-\int_0^{\tau_{\bF}( \bx)}h_{\theta}\, dt
=(e_1,e_2)\cdot\int_0^{\tau_{\bE}( \bx)} \bp\, dt=(e_1,e_2)\cdot \bDelta_{\bE}(\bx)$.
This finishes the proof of Proposition \ref{forcefield}.
\end{proof}

\begin{remark}\label{flowcurrent}
There is another classical representation of the current of the
billiard flow $\hat\bJ=\hat\mu(\bp)$, which may not hold for general
forces. More precisely, let $\hat\mu_{\bP}$ be the SRB
measure of the flow $\Phi_{\bP}$, which can also be obtained as the
suspension of $\mu_{\bP}$ on $\cM$. Then
$\hat\bJ_{\bP}=\hat\mu_{\bP}(\bp)$ holds if and only if
$\displaystyle
\bDelta_{\bP}(\bx)=\bDelta_{\bF}(\bx):=\int_0^{\tau_{\bF}(\bx)}\bp
dt$, that is, there is no slip on $\pQ$ after each collision. For a
general twist force with slip, we would have an additional term
$\displaystyle
\bDelta_{\bG}(T_{\bF}\bx)=\bDelta_{\bP}(\bx)-\bDelta_{\bF}(\bx)$. So
we have $\mu_{\bP}(\bDelta_{\bP})=\int_{M_o}
(\tpi(\tT_{\bP}\bx)-\tpi(\tT_{\bF}\bx))d\mu_{\bP}^o+\mu_{\bP}(\bDelta_{\bF})$.
Since $\mu_{\bP}$ is $T_{\bP}$-invariant, we get an modified formula
of the current for the forced flow
\begin{equation}
\hat\bJ_{\bP}=\hat\mu_{\bP}(\bp)
+\frac{1}{\mu_{\bP}(\tau_{\bP})}\int_{M_o} (\tpi(\bx)-\tpi({\bG}^{-1}\bx))d\mu^o_{\bP}.
\end{equation}
\end{remark}

\medskip

Now we turn to the proof of (\ref{tqnjncon}) and (\ref{hcDepsdel}).
\begin{proof}[Proof of Theorem \ref{Thm:2} and Theorem \ref{Thm:3}]
The convergence of of the distribution $\frac{\tq_n
-n\bJ_{\bP}}{\sqrt{n}}$ to a normal law $\cN(\textbf{0}, \bD_{\bP})$
follows directly from the central limit theorem (see Theorem
\ref{TmEDC}-c), since $\tq_n=\sum_{k=0}^{n-1}\bDelta_{\bP}\circ
T^k_{\bP}$, where $\bDelta_{\bP}$ belongs to $\cH$ by Lemma
\ref{deltataueps}. Thus it is enough to estimate the covariance
matrix $\bD_{\bP}$, which is given by the following sum of
correlations \beq\label{cDeps}
\bD_{\bP}=\sum_{n=-\infty}^{\infty}\left[
\mu_{\bP}(\bDelta_{\bP}\circ T^n_{\bP}\otimes
\bDelta_{\bP})-\mu_{\bP}(\bDelta_{\bP})\otimes
\mu_{\bP}(\bDelta_{\bP})\right]. \eeq Note that for any $n\geq 1$,
$$\lim_{\bP\to 0}\bDelta_{\bP}\circ T^n_{\bP}=\bDelta_0\circ
T^n_{0}.$$ Furthermore by Theorem \ref{TmEDC}, there exist $C>0$ and
$\theta\in (0,1)$, such that for any $\eps$-small force pair $\bP$
and for all $a,b\in\{x,y\}$,
 $$|\cC_{a,b}(n)|=
 |\mu_{\bP}(\Delta_{a,\bP}\circ T^n_{\bP}\cdot \Delta_{b,\bP})
 -\mu_{\bP}(\Delta_{a,\bP})\cdot\mu_{\bP}(\Delta_{b,\bP})|
 \leq C\theta^{|n|}.$$
Therefore the series $\sum_{n=-\infty}^{\infty} \cC_{a,b}(n)$ converges uniformly (and exponentially)
for all $a,b\in\{x,y\}$. So the diffusion matrix varies continuously  at $\bP=0$:
\begin{align}\label{cDeps2}
\bD_{\bP}&=\sum_{n=-\infty}^{\infty}
[\mu_{\bP}(\bDelta_{\bP}\circ T^n_{\bP}\otimes \bDelta_{\bP})-\mu_{\bP}(\bDelta_{\bP})
\otimes\mu_{\bP}(\bDelta_{\bP})]\nonumber\\
&=\sum_{n=-\infty}^{\infty} \mu_{0}(\bDelta_0\circ T^n_{0}\otimes
\bDelta_0)+o(1)=\bD_0+o(1),
\end{align}
since $\mu_0(\bDelta_0)=0$.
This finishes the proof of Theorem \ref{Thm:2}.  Theorem \ref{Thm:3} follows
directly from Theorem \ref{Thm:2}.
\end{proof}

Finally we prove Theorem \ref{Thm:1}.
\begin{proof}[Proof of Theorem \ref{Thm:1}]
Let $\lambda_{\bP}^s<0<\lambda^u_{\bP}$ denote the Lyapunov
exponents of the ergodic system $(T_{\bP},\mu_{\bP})$. The sum
$\xi_{\bP}:=-(\lambda_{\bP}^s+\lambda^u_{\bP})$ represents the {\it physical
entropy production rate} of the perturbed system $(T_{\bP},\mu_{\bP})$.
Then by Oseledets Multiplicity Ergodic Theorem, we have
$$\xi_{\bP}=-(\lambda_{\bP}^s+\lambda_{\bP}^u)=-\mu_{\bP} (\log \cJ_{\bP}).$$
Note that $1-\cJ_{\bP}=\cO(\epsilon)$. So we have
\begin{align*}
\xi_{\bP}&=-\mu_{\bP}(\log (1-(1-\cJ_{\bP})))
=\mu_{\bP}(1-\cJ_{\bP})\frac{1}{2}\mu_{\bP}((1-\cJ_{\bP})^2)
+\cO(\eps^3).
\end{align*}
Using the similar analysis as in (\ref{similar}) and $\mu_0(\cJ_{\bP})=1$, one can check that
\begin{align}\label{cJ1}
&\mu_{\bP}(1-\cJ_{\bP})=\mu_0(1-\cJ_{\bP})+\sum_{k=1}^{\infty} \mu_0[(1-\cJ_{\bP})\circ T_{\bP}^k \cdot (1-\cJ_{\bP})]\nonumber\\
=&\eps^2 \sum_{k=1}^{\infty} \mu_0(H\circ T_{\bP}^k \cdot H)+\cO(\eps^3)
=\eps^2 \sum_{k=1}^{\infty} \mu_0(H\circ T_{0}^k \cdot H)+o(\eps^2).
\end{align}
In addition, we have
\begin{align}\label{cJ2}
\mu_{\bP}((1-\cJ_{\bP})^2)&=\mu_0((1-\cJ_{\bP})^2)+\sum_{k=1}^{\infty} \mu_0[((1-\cJ_{\bP})^2)\circ T_{\bP}^k \cdot (1-\cJ_{\bP})]\nonumber\\
&=\eps^2 \mu_0(H^2)+\frac{\eps^3}{2}\sum_{k=1}^{\infty} \mu_0(H^2\circ T_{0}^k \cdot H)+o(\eps^3),
\end{align}
where we used (\ref{appcJ}) and (\ref{Kawasa}) in the above estimates.
Combining these facts, we get
$$\xi_{\bP}=\frac{\eps^2}{2}\mu_0(H^2)
+\eps^2\sum_{k=1}^{\infty} \mu_0(H\circ T_0^k\cdot H)+o(\eps^2)
=\eps^2\cdot \frac{\sigma^2_0(H)}{2}+o(\eps^2),$$
where
$\sigma^2_0(H)=\sum_{k=-\infty}^{\infty}\mu_0[H\circ T_0^k\cdot H]$.
Being an SRB measure, the metric entropy of $\mu_{\bP}$ satisfies
$h_{\mu_{\bP}}(T_{\bP})=\lambda_{\bP}^u=\mu_{\bP}(\Lambda^u_{\bP})$,
by Pesin's Entropy Formula and by Birkhoff Ergodic Theorem,
respectively, where $\Lambda^u_{\bP}(\bx)$ is the local expansion
rate of $\bx\in M$ along the unstable direction under map $T_{\bP}$.
Then according to (\ref{Kawasa}) and the exponential decay of
correlations, we have
\begin{align}\label{hmu}
h_{\mu_{\bP}}(T_{\bP})&=\mu_0(\Lambda^u_{\bP})
+\sum_{k=1}^{\infty} \mu_0(\Lambda^u_{\bP}\circ T^k_{\bP} \cdot (1-\cJ_{\bP}))\nonumber\\
&=\mu_0(\Lambda^u_{0})
+ \sum_{k=1}^{\infty} \mu_0(\Lambda^u_{0}\circ T_{\bP}^k \cdot  (\eps H+\eps^2 R_{\bP}))+o(1)\nonumber\\
&=h_0+o(1),
\end{align}
since $\Lambda^u_{\bP}\to \Lambda^u_{0}$ as $\bP\to 0$. Here
$h_0:=h_{\mu_0}(T_0)>0$ is the metric entropy of the unforced
billiard map $T_0$.
Combining the above facts and Young's Dimension Formula (\ref{HD})
in \cite{You}, we get
\begin{align*}
\mathrm{HD}(\mu_{\bP})&=h_{\mu_{\bP}}(T_{\bP})\left(\frac{1}{\lambda^u_{\bP}}-\frac{1}{\lambda^s_{\bP}}\right)
=1-\frac{\lambda^u_{\bP}}{\lambda^s_{\bP}}
=2-\frac{\xi_{\bP}}{h_{\mu_{\bP}}(T_{\bP})+\xi_{\bP}}\\
&=2-\frac{\eps^2\cdot\sigma^2_0(H)/2+o(\eps^2)}{h_0+\eps^2\cdot \sigma^2_0(H)/2+o(1)}\\
&=2-\eps^2\cdot\frac{\sigma^2_0(H)}{2h_0}+o(\eps^2).
\end{align*}
This completes the proof of Theorem \ref{Thm:1}.
\end{proof}
Then by the suspension property, we see that the dimension of the measure
$\hat\mu_{\bP}$ is given by
$$\mathrm{HD}(\hat\mu_{\bP})
=\mathrm{HD}(\mu_{\bP})+1 =3-\eps^2\cdot
 \frac{\sigma^2_H}{2h_0}+o(\eps^2).$$
So if $\eps$ is small and $\sigma^2_H>0$, then $1<\mathrm{HD}(\mu_{\bP})<2$ and
$2<\mathrm{HD}(\hat\mu_{\bP})<3$. Therefore, both $\mu_{\bP}$ and
$\hat \mu_{\bP}$ are singular with respect to the Lebesgue measures
and admit the fractal structures if $\sigma^2_H\neq 0$ under a small
force pair $\bP$. In fact, it is believed \cite{CELS2} (based on numerical evidences)
that $\mu_{bP}$ should be multifractal with a continuous spectrum
of fractal dimensions.

\medskip\noindent\textbf{Acknowledgement}.
N.C.\ was partially supported by National Science Foundation, grant
DMS-0969187. H.-K.Z.\  was partially supported by National Science
Foundation, grant DMS-1151762, and by the French CNRS with a {\em poste d'accueil} position  at the Center of Theoretical Physics in Luminy. P.Z. was partially supported by NNSF project (11071231).

\end{document}